\providecommand{\keywords}[1]
{
  \small	
  \textbf{{Keywords.}} #1
}
\providecommand{\AMS}[1]
{
  \small	
  \textbf{{AMS subject classifications.}} #1
}
\numberwithin{equation}{section}
\newtheorem{theorem}{Theorem}[section]
\newtheorem{remark}[theorem]{Remark}
\newtheorem{definition}[theorem]{Definition}
\newtheorem{lemma}[theorem]{Lemma}
\newtheorem{proposition}[theorem]{Proposition}
\DeclareMathOperator*{\argmin}{arg\,min}
\title{Mean Field Game Model for an \\Advertising Competition in a Duopoly}
\author{Ren\'e Carmona\footnote{Department of Operations Research and Financial Engineering,
  Princeton University, 
  Princeton, NJ 08544 
  (\href{mailtorcarmona@princeton.edu}{rcarmona@princeton.edu},
  \href{mailto:gokced@princeton.edu}{gokced@princeton.edu})}
\and G\"ok\c ce Dayan{\i}kl{\i}
    \footnotemark[1]
}
\date{}
\begin{document}

\maketitle
\begin{abstract}
In this study, we analyze an advertising competition in a duopoly. We consider two different notions of equilibrium.
We model the companies in the duopoly as major players, and the consumers as minor players. In our first game model we identify Nash Equilibria (NE) between all the players. Next we frame the model to lead to the search for Multi-Leader-Follower Nash Equilibria (MLF-NE). This approach is reminiscent of Stackelberg games in the sense that the major players design their advertisement policies assuming that the minor players are rational and settle in a Nash Equilibrium among themselves.
This rationality assumption reduces the competition between the major players to a 2-player game. After solving these two models for the notions of equilibrium, we analyze the similarities and differences of the two different sets of equilibria.
\end{abstract}

\vskip3mm
\keywords{Mean Field Games; Stackelberg Equilibrium; Duopoly Competition.}

\vskip3mm
\AMS{ 91A80, 91A16, 91A07
}

\vskip 12pt\noindent
\emph{\textbf{Funding.} Both authors were supported by AFOSR award FA9550-19-1-0291.
}
{}

\section{Problem Statement and Literature Review}

	In this paper, we analyze an advertising competition in a duopoly with special attention paid to consumer behavior. We consider a model with $2$ large companies which we regard as \emph{major players}, and a large number of consumers whom we treat as \emph{minor players}. Company $j$, $j = 1,2$ produces product $j$, and product differentiation is horizontal which means that even if the quality of the products are the same, they are differentiated in the consumers' perception. Therefore, at the same price, some consumers prefer Product 1, while others prefer the other product. The model is designed as a static (one-shot) game.
	
	As expected in a duopoly, one of the goals of the companies is to increase their sales. According to \cite{Bass2005}, a company in a duopoly can increase its sales either by increasing its market share or if there is a market expansion. We assume that there is no market expansion; in other words, the total sales of the products stay constant. This is reasonable since we work with a static model instead of a dynamic one. Since we are assuming that the market size is constant, one of the goals of the companies is to increase their market share. \cite{Doyle1968} states that market share can be increased either by decreasing the price, or increasing the advertising. He also mentions that in a market with few companies, competition is through non-price ways.

	\cite{Mankiw2012} states that companies in oligopoly with differentiated consumer products such as soft drink, perfume or breakfast cereal have incentive to invest in advertising to make the consumers less price elastic. One of the goals of advertisement is to convince consumers that companies' products are more differentiated than they really are. Therefore,  advertisements are more often than not persuasive instead of informative, trying to create a brand name and foster brand loyalty. On the other hand, consumers may perceive advertisement as a signal of quality, and this may make them likely to prefer the highly advertised product. Therefore, persuasive advertisements affect consumer's preference by boosting the product's perceived value.

	According to \cite{Clarke1973}, advertisement of a company does not only affect their bottom line, it also affects the opposing company. When a company advertises more, it increases its own sales and decreases the opposing firm's sales. Therefore, a company would like to increase its relative advertising which is the ratio of their own advertisement efficiency to the total advertisement efficiency. This is an instance of \emph{negative externality} as increasing one of the product's advertisement efficiency leads to a decrease in the opposing company's demand.

	 It is particularly hard to find Nash equilibria in games with large numbers of players. However, by assuming a form of symmetry among the players' behaviors, and letting the number of players go to infinity while the influence of each individual player fades, we can make use of the recently developed theory of Mean Field Games (MFGs). Mean Field Game models were introduced by \cite{Lasry2007}, and independently by \cite{Huang2003,Huang2004,Huang2006}.

The history of the subject and the development of the probabilistic approach to the solution of Mean Field Games introduced in \cite{Carmona2013} and further information can be found in the two volume book of \cite{carmona_delarue_2018}. While MFGs are relevant in plenty of practical situations, in many real life applications there exists a player that affects the system disproportionally, for example a government or a regulator. In these cases, the addition of a \emph{major player} may be required. Mean Field Games with major and minor players were introduced by \cite{Nourian2013} and analyzed by \cite{Carmona2016}, \cite{Carmona2017}, and \cite{Bensoussan2016}. In these types of games, the minor players' decisions are affected by the aggregate of the other minor players as well as the decision of the major player. On the other hand, the decision of the latter is only affected by it own costs and rewards, and aggregate statistics from the population of the minor players. The originality of our contribution is to consider the competition between two major players affecting the field of minor players in a way akin to what was considered in the literature we just cited.

	In this paper, we analyze two different equilibrium notions for advertisement and product consumption levels in a duopoly. In the first case, a Nash equilibrium between both major players and the consumers is analyzed. In the second one, the major players compete in a 2-player game assuming that the consumers are rational, and anticipating their purported behaviors. They choose their advertising policies assuming that the consumers will react to their choices and settle in a Nash equilibrium among themselves. We call this equilibrium ``Multi-Leader-Follower Nash Equilibrium". So for this second equilibrium notion, the major players compete among each others, but vis-a-vis the consumers, they behave as in a Stackelberg game by taking actions assuming that the minor players will react rationally.
	Even though leaders and  followers do not act contemporaneously in the original Stackelberg game model introduced by Heinrich Stackelberg in 1934, this will be the case in the first of our models. Note also that a model of a Multi-Leader-Follower game for $N$ followers was analyzed in \cite{Fukushima2015}, but the mean field limit $N\to\infty$ and the subsequent Mean Field Game formulation for the minor players were not considered. 
	
	We call the first model setting where we search for a Nash equilibrium among all major and minor players \textit{``MFG Formulation for a Nash Equilibrium (NE)"}, and the second setting where we search for an equilibrium when the minor players are settling in a Nash equilibrium among themselves while reacting to the major players who are playing a 2-player game  \textit{``MFG Formulation for a Multi-Leader-Follower Nash Equilibrium (MLF-NE)"}. 
	
	With this model, we conclude that for companies, it is inefficient to use Nash Equilibrium advertising strategies instead of using Multi-Leader-Follower Nash Equilibrium strategies. The reason for this is that companies overly advertise and consequently incur high costs, if they are not able to understand how the consumers are going to react to their strategies (NE setup). Therefore, it is recommended that they should understand the consumers' behavior and use the MLF-NE strategies. Further, we also deduce that a company in an adverse position initially (i.e. having a lower market share at the beginning) may end up as a market leader, if the companies are able to analyze consumers' reaction and in other words, use MLF-NE strategies. However, if the companies are using NE strategies while advertising, the market leader protects its position.
	
	 Advertising behavior of one major player with a large number of minor players is analyzed in \cite{Malhame2016_2}. However, in that paper, the model is dynamic and there is no competition among major players. Competition in terms of price and quantity in an Oligopoly by using Mean Field Games was analyzed by Chan and Sircar \cite{Chan2015}. In this model, consumers are not included as players and a large number of firms are set as players; moreover, the competition between them is not in terms of advertising. Therefore, our model is the first model that analyzes advertising competition in a duopoly under the Mean Field Games paradigm with multiple major and minor players. 	

The paper is structured as follows. First, we introduce the model with $N$ consumers and articulate the equilibrium notions in Section~\ref{sec:N_player_model}. Then we give the mean field game formulation in Section~\ref{sec:mfg_model}. We state amd prove our existence and uniqueness results for both equilibrium notions in Section~\ref{sec:theoretical_results}. Finally we compare the properties of these two equilibrium notions through numerical experiments in Section~\ref{sec:Numerical_results}.

\section{N-Player Model}
\label{sec:N_player_model}
	\subsection{Minor Players}
		We first consider the case of a finite number of consumers and we assume they behave in a \emph{symmetric} manner. A generic consumer (minor player) is denoted as minor $i$ where $i=1,2,...,N$.
		
		 Each consumer $i$ controls their preference rate for Product 1 which is denoted as $u_i^{c} \in [0,1]$. In particular, if $u^{c}_{i} = 1$, then consumer $i$ buys Product 1 only, whereas, if $u^{c}_{i} = 0$, they buy Product 2 only. Whenever $u_i^{c} \in (0,1)$, their consumption of Product 1 is $(100\times u_i^{c})\%$ of their total consumption. Like for the type of a player in a Bayesian game, we assume that the initial value $u^c_{0,i}$ of the control of consumer $i$ is random and has a-priori distribution $\mu_0\in\mathcal{P}([0,1])$ where $\mathcal{P}([0,1])$ denotes the set of probability measures on $[0,1]$. Each player knows their initial preference, but does not know the others. We shall assume that the actual control $u^c_i$ will be a feedback function of its initial value.
		 
		 Given the major players' advertisement efficiencies and the empirical distribution of the other minor players' controls, each consumer decides on their own control according to their goals and costs. Because of our symmetry assumption, we assume that all the consumers have the same objectives. Firstly, they want to be faithful to their initial preferences, so they do not want to change their initial choices by much. However, consumers care about the choices of others and they do not want to deviate from the average, so they want to buy the more commonly preferred product. Finally, they want to increase their total utility from the products. With all these conditions in mind, we define the optimization problem of consumer $i$ as follows:
		\begin{multline} \label{MinorOpt}
		\min_{u^{c}_{i}: [0,1] \rightarrow [0,1]} \mathbb{E}_{u_{0,i}^c \sim \mu_0}
		\Bigg\{
		\dfrac{\beta}{2} (u^{c}_{i}-u^{c}_{0,i})^{2} + 
		\dfrac{\eta}{2} (u^{c}_{i}-\bar{u}_{-i}^{c})^{2}
		-\\ \left[(\alpha+e_{1})u^{c}_{i} + (\alpha+e_{2})(1-u^{c}_{i})-\dfrac{(u^{c}_{i})^{2}+(1-u^{c}_{i})^{2}}{2}\right]\Bigg\}
		\end{multline}
		 where $\bar{u}_{-i}^{c} = \frac{1}{N-1} \sum_{j=1, j\neq i}^{N}u^{c}_{j}$ and $\mu_0 \in \mathcal{P}([0,1])$. The rationale for the choice of the above objective function can be explained as follows:
		
		 The first term represents the unwillingness of a consumer to change preference. This may be caused from brand loyalty or not being prone to change. The second term represents the fact that a typical consumer does not want to deviate from the average: $\bar{u}_{-i}^{c}$ denotes the mean of the controls of other consumers, and can be interpreted as the market share of Product 1 when the number of players is large. Here $ \beta > 0 $ and $ \eta > 0$ represent the relative importance given to these cost terms. In {what} follows we use $ \beta = \eta = 1 $ for simplicity. The last part is the maximization of the utility of a consumer from the consumed products. We use the utility function already used by \cite{Hattori2012}, and previously by \cite{Singh1984} and \cite{Garella2008}:
		\begin{equation}
		\mathit{U(u_i^c)}=(\alpha+e_{1})u^{c}_{i} + (\alpha+e_{2})(1-u^{c}_{i})-\dfrac{(u^{c}_{i})^{2}+(1-u^{c}_{i})^{2}-2\gamma(1-u^{c}_{i})u^{c}_{i}}{2}
		\end{equation}
		Here, $\gamma \in [0,1]$ represents the substitutability degree of the two products: as it becomes closer to 1, consumers become more price elastic. Since we want consumers to be perfectly price inelastic, we assume $\gamma=0$. We also assume that the true qualities of Product 1 and 2 are the same, and we denote their common value by $\alpha \geq 0$. The number $e_{j}$ denotes the perceived incremental quality as a result of the advertisement of Product $j$. Here we assume:
		\begin{equation}
		\begin{aligned}
		e_1 = \mathit{f}(u_{1})\qquad\qquad
		e_2 = \mathit{f}(u_{2})
		\end{aligned}
		\end{equation}
		with $\frac{\partial e_1}{\partial u_1} \geq 0$ and $ \frac{\partial e_2}{\partial u_2} \geq 0$. This intuitively means that the utility from Product $j$ increases with the advertisement efficiency of Product $j$ and the advertisement of the opposing firm does not have an effect on consumer's perceived quality for the Product $j$ . For the sake of simplicity, we are defining $e_1$ and $e_2$ as follows:
		\begin{equation}
		\begin{aligned}
		e_1 = u_{1}\qquad\qquad
		e_2 = u_{2}
		\end{aligned}	
		\end{equation}

	\subsection{Major Players}
	
		 The two major players are the competitive companies in the duopoly. Major players $1$ and $2$ produce Product 1 and Product 2 which are not differentiated in quality but are horizontally differentiated in the perception of the consumers. This can be understood as the famous example of the Pepsi and Coke advertisement competition: even if people fail blind-folded test, they continue to like Coke over Pepsi, or the opposite. 				
		
		 According to \cite{Doyle1968}, companies in a duopoly tend to compete with non-price means. Therefore in our model, companies are not controlling the price. They may compete in terms of loyalty schemes, quality differentiation or advertisement. Since advertising is one of the main forms of competition in a duopoly, in this model both major players control their advertisement efficiency which we take as the square root of the amount they spend on advertisement and which we denote by $u_{j} \in \mathbb{R_+}$ for major {player} $j$, $j=1,2$. Here, advertisement is persuasive and it does not have predefined targets. In other words, it affects every consumer in the same way and hopefully, positively.
		
		 Major players have similar goals and costs. Firstly, they want to maximize their market share, and secondly, they want to advertise relatively more than the opposing company to be better known by consumers.  Finally, they want to minimize their cost of advertisement. We define their optimization problems as follows:
		 
		 For major {player} 1:
		\begin{equation} \label{Major1Opt}
		\min_{u_{1} \in \mathbb{R_+}} J^1(u_1; u_2, \bar u^c)=\min_{u_{1} \in \mathbb{R_+}} \Big\{ -\left(\rho_{1}u_{1}\left(1-\bar{u}^c\right)-\rho_{2}u_{2}\bar{u}^c\right)-\left(\dfrac{u_{1}+\epsilon}{u_{2}+\epsilon}\right) + \dfrac{c}{2}u_{1}^{2}\Big\}
		\end{equation}		
		
		 For major {player} 2:
		\begin{equation} \label{Major2Opt}
		\min_{u_{2} \in \mathbb{R_+}} J^2(u_2; u_1, \bar u^c)=\min_{u_{2} \in \mathbb{R_+}} \Big\{ -\left(\rho_2u_{2}\bar{u}^c-\rho_{1}u_{1}\left(1-\bar{u}^c\right)\right)-\left(\dfrac{u_{2}+\epsilon}{u_{1}+\epsilon}\right) + \dfrac{c}{2}u_{2}^{2} \Big\}
		\end{equation}
		where $\bar u^c = \frac{1}{N}\sum_{i=1}^N u_i^c$. Here is the rationale for these choices:
		
		 The first part of the cost function is for increasing their own market share. As previously stated, market share of Product 1 is taken as the mean of the control of minor players and denoted here as $\bar{u}^c=\frac{1}{N}\sum_{j=1}^{N}u_j^c$. Assuming that both companies own the entire market and that there is no market expansion, market share of Product 2 is given by $1-\bar{u}^c$. Here, we use ideas from the classical dynamic Lanchester Model used by \cite{Fruchter1997} and \cite{Fruchter1999}. Lanchester Combat Model is a competitive extension for the Vidale-Wolfe Model proposed by \cite{Vidale1957} and used by \cite{Bass2005}. Different modifications of this model are also used by \cite{Erickson1995}, and \cite{Prasad2003,Prasad2004}. According to the Lanchester model, over time the dynamics of market share are given respectively for Product 1 and 2 by:			
		\begin{equation}
		\begin{aligned}
		\dot{x} = \rho_{1}u_{1}(1-x)-\rho_{2}u_{2}(x) \\
		\dot{(1-x)} = \rho_{2}u_{2}(x) - \rho_{1}u_{1}(1-x)
		\end{aligned}
		\end{equation}
		where $x$ denotes the market share of Product 1 and $(1-x)$ denotes the market share of Product 2. Here, $\rho_{j}u_{j}$ denotes advertisement efficiency of major {player} $ j $, $j=1,2$ where $\rho_{j}$ is the positive efficiency constant and $u_{j}$ is the square root of advertisement amount of major {player} $j$. Intuitively, each company takes a part of the opposing company's market share which is proportional to their advertisement efficiency, and at the same time, each of them is losing a part of their own market share proportionally to the opposing company's advertisement efficiency.  For the sake of simplicity, in the remaining of this paper, we take $\rho_{1}=\rho_{2}=1$ and $u_1$ and $ u_2$ are called advertisement efficiencies of major player 1 and 2 respectively. The above equation gives the dynamics of the market share for Product 1 over time. However, since our model is designed as a static game, we assume that companies are focusing on increasing their market share instantly. Since they are minimizing, they are taking the negative signed versions of above change rates.
		
		 The second part of the cost function comes from the desire to be known more widely by increasing their relative advertisement efficiency. For example, major {player} 1 tries to increase the ratio $(u_{1}+\epsilon)/(u_{2}+\epsilon)$, where $\epsilon>0$ is a constant. Since the cost function is minimized we use again negative sign for this part. Here the addition of the constant $\epsilon$ is to enable the analysis of the cases where a company does not advertise, namely $u_1=0$ or $u_2=0$.

		 The last contribution to the cost is intended to minimize advertisement spending. Here $\frac{c}{2} > 0$ gives the cost per unit of advertisement and $u_{j}^{2}$ gives the advertisement amount of major {player} $ j $. For the sake of simplicity, we assume that both companies have the same unit advertisement cost. Therefore major {player} $j$ tries to minimize $\frac{c}{2}u_{j}^{2}$.
	
\subsection{Equilibrium Notions}

As explained in the introduction, we analyze two different types of equilibrim.

\begin{definition}[{Nash Equilibrium}]\label{def:nash}
With the same notations as in the previous definition, a strategy profile $\left(u^{c*}_1,u^{c*}_2,\dots,u^{c*}_N, u^{*}_1,u^*_2\right) \in S$ is called a Nash Equilibrium if:
\begin{itemize}[label=$\rightarrow$]
	\item For any fixed $ 1\leq k \leq N $, for all $ u^c_k \in S^c_k $, we have: $$J^c_k\big(\textcolor{Bittersweet}{u^c_k}; \boldsymbol{u^{c*}_{-k}},u^*_1,u^*_2\big) \geq J^c_k\big(u^{c*}_k;\boldsymbol{u^{c*}_{-k}},u^*_1,u^*_2\big), $$
	\item For all $u_1 \in S_1$, we have:
	$$J^1\left(\textcolor{Bittersweet}{u_1};\boldsymbol{u^{c*}}, u^*_2\right) \geq J^1\left(u^*_1;\boldsymbol{u^{c*}},u^*_2\right),$$
	\item For all $u_2 \in S_2$, we have:
	$$J^2\left(\textcolor{Bittersweet}{u_2};\boldsymbol{u^{c*}}, u^*_1\right) \geq J^2\left(u^*_2;\boldsymbol{u^{c*}},u^*_1\right), $$		
	where $\boldsymbol{u^{c*}}= (u^{c*}_1,\dots,u^{c*}_N)$ and $\boldsymbol{u^{c*}_{-k}}= (u^{c*}_1,\dots, u^{c*}_{k-1}, u^{c*}_{k+1}, \dots,u^{c*}_N)$.
\end{itemize}

\end{definition}

\begin{definition}[{Multi-Leader-Follower Nash Equilibrium}]\label{def:mlf_nash}
Assume there exist N many minor players and 2 major players. Let $ S = S^c_1 \times S^c_2 \times \dots S^c_N \times S_1 \times S_2$ and $J_k^c\big(u^c_k(u_1,u_2);\boldsymbol{u^c_{-k}}(u_1,u_2), u_1, u_2\big)$,\\$ J^1\left(u_1; u^c_1(u_1,u_2),...,u^c_N(u_1,u_2),u_2\right),$ $ J^2\left(u_2; u^c_1(u_1,u_2),...,u^c_N(u_1,u_2), u_1\right)$, $\forall k = 1,2,\dots N$ are strategy profiles and cost functions for N minor players and 2 major players, respectively. Then a strategy profile $\left(u^{c*}_1(u^*_1,u^*_2),u^{c*}_2(u^*_1,u^*_2),\dots,u^{c*}_N(u^*_1,u^*_2), u^{*}_1,u^*_2\right) \in S$ is called a Multi-Leader-Follower Nash Equilibrium if:
\begin{itemize}[label=$\rightarrow$]
	\item For any fixed $ 1\leq k \leq N $, for all $ u^c_k \in S^c_k $, we have: 
	$$ J^c_k\big(\textcolor{Bittersweet}{u^c_k}(u^*_1,u^*_2); \boldsymbol{u^{c*}_{-k}}(u^*_1,u^*_2),u^*_1,u^*_2\big) \geq J^c_k\big(u^{c*}_k(u^*_1,u^*_2),\boldsymbol{u^{c*}_{-k}}(u^*_1,u^*_2),u^*_1,u^*_2\big), $$
	\item For all $u_1 \in S_1$, we have:
	$$J_1\left(\textcolor{Bittersweet}{u_1};\boldsymbol{u^{c*}}(\textcolor{Bittersweet}{u_1},u^*_2),u^*_2\right) \geq J_1\left(u^*_1;\boldsymbol{u^{c*}}(u^*_1,u^*_2),u^*_2\right), $$
	\item For all $u_2 \in S_2$, we have:
	$$J_2\left(\textcolor{Bittersweet}{u_2} ; \boldsymbol{u^{c*}}(u^*_1,\textcolor{Bittersweet}{u_2}),u^*_1\right) \geq J_2\left(u^*_2,\boldsymbol{u^{c*}}(u^*_1,u^*_2),u^*_1\right),$$		
	where $\boldsymbol{u^{c*}}\left(\textcolor{Bittersweet}{u_1},u^*_2) = (u^c_1(\textcolor{Bittersweet}{u_1},u^*_2),\dots,u^c_N(\textcolor{Bittersweet}{u_1},u^*_2)\right) $ and $\boldsymbol{u^{c*}}(u^*_1,\textcolor{Bittersweet}{u_2}) =$ $(u^c_1(u^*_1,\textcolor{Bittersweet}{u_2}),\dots,u^c_N(u^*_1,\textcolor{Bittersweet}{u_2}))$.
\end{itemize} 
\end{definition}

\section{Mean Field Game Formulation}
\label{sec:mfg_model}

	The present formulations correspond to the asymptotic regime whereby the number $N$ of minor players goes to $+\infty$. {Since players are identical, we focus on a \textit{representative} minor player.}
	
	\begin{remark}
	In the limit $N\to\infty$, {the representative player becomes infinitesimal; therefore, $\bar{u}_{-i}^{c}$ can be taken as $\bar u^c$.} Hereafter, $\bar{\mu}$ is used for the mean of the control of other minor players in the infinite number of player game instead of $\bar{u}_{-i}^{c}$ and it is equal to the mean of the controls of the all minor players, $\bar{u}^{c}$.
	\end{remark} 
	When we analyze the mean field game regime, \textit{representative} minor player's cost function can be written as:
	\begin{multline} \label{eq:minorcost_mfg}
        J^c(u^c;\bar \mu, u_1, u_2) =
		\mathbb{E}_{u_{0}^c \sim \mu_0}
		\Bigg\{
		\dfrac{\beta}{2} (u^{c}-u^{c}_{0})^{2} + 
		\dfrac{\eta}{2} (u^{c}-\bar{\mu})^{2}
		\\ - \left[(\alpha+u_{1})u^{c} + (\alpha+u_{2})(1-u^{c})-\dfrac{(u^{c})^{2}+(1-u^{c})^{2}}{2}\right]\Bigg\}
	\end{multline}
	where $u^c: [0,1] \rightarrow [0,1]$ is the feedback control function used by the representative minor player to update their initial preference rate $u_0^c \sim \mu_0$. Recall that we use the notation $\bar \mu$ for the mean of the control of the minor players. Hereinafter, the mean of the \textit{initial} preference rate is denoted as $\bar\mu_0=\mathbb{E}[u_0^c]$ or $\bar u_0^c$.
	
	The major players cost functions remain the same as in the case of $N$ finite. Only for consistency in the notation, $\bar{u}^c$ is changed to $\bar \mu$. We define the equilibrium notions in the mean field game model as follows:
	
	\begin{definition}[{Nash Equilibrium in the Mean Field Game with Multiple Major Players}] A strategy and a mean field tuple $(u^{c*},u_1^*, u_2^*, \bar \mu(u^{c*},u_1^*, u_2^*))$ form a \textit{Nash Equilibrium in the Mean Field Game regime with Multiple Major Players} if for any $u^c\in[0,1]$, $u_1, u_2 \in \mathbb{R}_+$ we have:
	\begin{equation*}
	\begin{aligned}
	    J^c(\textcolor{Bittersweet}{u^c};\bar \mu(\textcolor{black}{u^{c*}},u_1^*, u_2^*), u^*_1, u^*_2) &{\geq} J^c(u^{c*};\bar \mu(u^{c*},u_1^*, u_2^*), u^*_1, u^*_2)\\
	    J^1(\textcolor{Bittersweet}{u_1}; u^*_2, \bar \mu(u^{c*},\textcolor{Bittersweet}{u_1}, u_2^*)) &{\geq} J^1(u^*_1; u^*_2, \bar \mu(u^{c*},u_1^*, u_2^*))\\
	    J^2(\textcolor{Bittersweet}{u_2}; u^*_1, \bar \mu(u^{c*},u_1^*, \textcolor{Bittersweet}{u_2})) &{\geq} J^2(u^*_2; u^*_1, \bar \mu(u^{c*},u_1^*, u_2^*))
	\end{aligned}
	\end{equation*}
	
	\end{definition}
	
	\begin{definition}[{Multi Leader Follower Nash Equilibrium in the Mean Field Game}] A strategy and a mean field tuple $(u^{c*}(u_1^*, u_2^*),u_1^*, u_2^*, \bar \mu(u^{c*}(u^*_1, u_2^*)))$ form a \textit{Multi Leader Follower Nash Equilibrium in the Mean Field Game} regime if for any $u^c\in[0,1]$, $u_1, u_2 \in \mathbb{R}_+$, we have:
	\begin{equation*}
	\begin{aligned}
	    J^c(\textcolor{Bittersweet}{u^c}(u^*_1, u_2^*);\bar \mu(\textcolor{black}{u^{c*}}(u^*_1, u_2^*),u_1^*, u_2^*), u^*_1, u^*_2) &{\geq} J^c(u^{c*}(u^*_1, u_2^*);\bar \mu(u^{c*}(u^*_1, u_2^*),u_1^*, u_2^*), u^*_1, u^*_2)\\
	    J^1(\textcolor{Bittersweet}{u_1}; u^*_2, \bar \mu(u^{c*}(\textcolor{Bittersweet}{u_1}, u_2^*),\textcolor{Bittersweet}{u_1}, u_2^*)) &{\geq} J^1(u^*_1; u^*_2, \bar \mu(u^{c*}(u^*_1, u_2^*),u_1^*, u_2^*))\\
	    J^2(\textcolor{Bittersweet}{u_2}; u^*_1, \bar \mu(u^{c*}(u^*_1, \textcolor{Bittersweet}{u_2}),u_1^*, \textcolor{Bittersweet}{u_2})) &{\geq} J^2(u^*_2; u^*_1, \bar \mu(u^{c*}(u^*_1, u_2^*),u_1^*, u_2^*))
	\end{aligned}
	\end{equation*}
	 
	\end{definition}

	\section{Main Theoretical Results}
	\label{sec:theoretical_results}
	\subsection{Nash Equilibrium in the Mean Field Game with Major Players}
	 First, we focus on finding the Nash Equilibrium between major players and minor players. Here, all players are giving their best responses given other players' controls. We approach the model as follows:
	
	\begin{enumerate}
			\item First we fix the mean field $\bar{\mu}$ and solve 2-player game of Major Players to find their best responses given the mean field and the other major player's control:
			\begin{itemize}[label=$\rightarrow$]
				\item For major {player} 1, find $u_1^*=\varphi^1(u_2,\bar{\mu})$ $s.t:$
				\begin{equation}\label{Eq:NEMajor1Cost}
				u_1^* = \argmin_{u_{1} \in \mathbb{R_+}} \left\{ -\left(u_{1}\left(1-\bar{\mu}\right)-u_{2}\bar{\mu}\right)-\left(\dfrac{u_{1}+1}{u_{2}+1}\right) + \dfrac{c}{2}u_{1}^{2} \right\}.
				\end{equation}
				\item For major {player} 2, find $u_2^*=\varphi^2(u_1,\bar{\mu})$ $s.t:$
				\begin{equation}\label{Eq:NEMajor2Cost}
				u_2^*= \argmin_{u_{2} \in \mathbb{R_+}} \left\{ -\left(u_{2}\bar{\mu}-u_{1}\left(1-\bar{\mu}\right)\right)-\left(\dfrac{u_{2}+1}{u_{1}+1}\right) + \dfrac{c}{2}u_{2}^{2} \right\}.
				\end{equation}
			\end{itemize}
			\item We solve the 2-equation system of $u_1^*=\varphi^1(u_2,\bar{\mu})$ and $u_2^*=\varphi^2(u_1,\bar{\mu})$ to find the equilibrium controls $u_1^{**}=\phi^1(\bar{\mu})$ and $u_2^{**}=\phi^2(\bar{\mu})$ of major players in the 2-player game given the mean field of minor players.
			
			\item Then we fix mean field $ \bar{\mu} $, $u_1$ and $u_2$: 
			\begin{itemize}[label=$\rightarrow$]
				\item By considering the limit $N \rightarrow \infty$, solve the following mean field game problem for Minor Player where $ u_{0}^c $ is the initial control of minor players which is random, in other words:
				\begin{itemize}
				    \item Find $u^{c*}(u_{0}^c,\bar{\mu},u_1,u_2)$ s.t:
				\begin{multline}\label{eq:MinorCost}
				u^{c*} = \argmin_{u^{c}: [0,1]\rightarrow [0,1]} \mathbb{E}_{u_0^c \sim \mu_0}\Bigg\{\dfrac{1}{2} \left(u^{c}-u^{c}_{0}\right)^{2} + 
				\dfrac{1}{2} \left(u^{c}-\bar{\mu}\right)^{2} \\
				- \left[\left(\alpha+u_{1}\right)u^{c} + \left(\alpha+u_{2}\right)\left(1-u^{c}\right)-\dfrac{\left(u^{c}\right)^{2}+\left(1-u^{c}\right)^{2}}{2}\right]\Bigg\}.
				\end{multline}	
				\end{itemize}
				\item Fixed Point Argument:
				\begin{itemize}
				    \item Find $\bar{\mu}=\phi(u_1,u_2) \text{ s.t. } \bar{\mu} = \mathbb{E}[u^{c*}(u_0^c, \bar \mu, u_1, u_2)]$
				\end{itemize}
			\end{itemize}
			\item Solve the following 3-player system:
			\begin{align*}
			\begin{cases}
			u^{**}_1&=\phi^1(\bar{\mu}),\\
			u^{**}_2&=\phi^2(\bar{\mu}),\\      
			\bar{\mu}&=\phi(u_1^{**},u_2^{**}).
			\end{cases}
			\end{align*}
		\end{enumerate}
		
		 \textbf{Remark:} In step 2, instead of solving 2-player game and finding $u^{**}_1=\phi^1(\bar{\mu})$ and
		$u^{**}_2=\phi^2(\bar{\mu})$, we can continue directly to step 3. In this case, we would have the following 3-equation system at the end:
		\begin{align*}
		\begin{cases}
		u^*_1&=\varphi^1(u_2^*,\bar{\mu}),\\
		u^*_2&=\varphi^2(u_1^*,\bar{\mu}),\\      
		\bar{\mu}&=\phi(u_1^*,u_2^*).
		\end{cases}
		\end{align*}

	\begin{proposition} \label{prop:mfg_system}The final equation system is given as:
	\begin{subequations}
	\begin{gather}
	    u^{**}_{1}=-\frac{\bar{\mu}^2+2c\bar{\mu}-\bar{\mu}-c+c^2-\sqrt{(c+\bar{\mu})(c^2+5c-\bar{\mu}^2+\bar{\mu})(c-\bar{\mu}+1)}}{2c(c+\bar{\mu})}\label{ne_u1},\\
		u^{**}_{2}=\dfrac{\bar{\mu}-c+\sqrt{\frac{(c+\bar{\mu})(c^2+5c-\bar{\mu}^2+\bar{\mu})}{c-\bar{\mu}+1}}}{2c}\label{eq:ne_u2},\\
		\bar{\mu}=\dfrac{u_1^{**}-u^{**}_2+1+\mathbb{E}[u^{c}_0]}{3}\label{eq:ne_mu}.
	\end{gather}
	\end{subequations}
	\end{proposition}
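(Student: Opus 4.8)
The plan is to carry out the four reduction steps listed before the statement, converting each optimization into a first-order condition and then solving the resulting coupled algebraic system; the heart of the argument is the elimination that turns the two major-player best responses into a single quadratic.

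I would first differentiate the major players' objectives \eqref{Eq:NEMajor1Cost}--\eqref{Eq:NEMajor2Cost} in their own controls. Each is strictly convex in that variable since the second derivative equals $c>0$ and the coupling terms $-(u_1+1)/(u_2+1)$, $-(u_2+1)/(u_1+1)$ are affine in the differentiating variable, so the stationarity conditions pin down the unique minimizers. Evaluating the derivatives at $u_1=0$ and $u_2=0$ gives $-(1-\bar\mu)-1/(u_2+1)<0$ and $-\bar\mu-1/(u_1+1)<0$ (using $\bar\mu\in[0,1]$), hence the nonnegativity constraints are inactive and the minimizers are interior. This produces the best responses $c\,u_1=(1-\bar\mu)+1/(u_2+1)=\varphi^1(u_2,\bar\mu)$ and $c\,u_2=\bar\mu+1/(u_1+1)=\varphi^2(u_1,\bar\mu)$.

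Solving this coupled pair for $u_1^{**},u_2^{**}$ as functions of $\bar\mu$ is the main obstacle. I would set $a=u_1+1$, $b=u_2+1$ to symmetrize the relations into $ca=c+1-\bar\mu+1/b$ and $cb=c+\bar\mu+1/a$, then add and subtract them. Writing $s=a+b$, $d=a-b$, $p=ab$, both combinations factor through $(c-1/p)$, so their ratio yields $d/s=(1-2\bar\mu)/(2c+1)$, while eliminating $p$ between $p=(s^2-d^2)/4$ and the sum equation collapses everything to the single quadratic $cK s^2-(2c+1)Ks-(2c+1)^2=0$, where $K:=(c+\bar\mu)(c+1-\bar\mu)=c^2+c+\bar\mu-\bar\mu^2$. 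Since $K>0$ and the constant term is negative, exactly one root is positive, and it is the only one compatible with $s=u_1+u_2+2>0$; selecting it and unwinding $u_1=(s+d)/2-1$, $u_2=(s-d)/2-1$ gives \eqref{ne_u1} and \eqref{eq:ne_u2}. The identities $c^2+5c-\bar\mu^2+\bar\mu=K+4c$ and $c-\bar\mu+1=K/(c+\bar\mu)$ are what recast the square roots into the stated form, and this bookkeeping is where I expect the effort to concentrate and signs to be easiest to lose.

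Finally I would handle the representative minor player's problem \eqref{eq:MinorCost}. Combining the two quadratic penalties with the quadratic part of the utility gives an integrand with $(u^c)^2$-coefficient $2$, hence strict convexity, and its first-order condition yields the affine feedback $u^{c*}=\tfrac{1}{4}\big(u_0^c+\bar\mu+u_1-u_2+1\big)$. Taking expectations over $u_0^c\sim\mu_0$ and imposing the consistency condition $\bar\mu=\mathbb{E}[u^{c*}]$ turns $4\bar\mu=\mathbb{E}[u_0^c]+\bar\mu+u_1-u_2+1$ into $\bar\mu=(u_1-u_2+1+\mathbb{E}[u_0^c])/3$, which is \eqref{eq:ne_mu} after substituting $u_1^{**},u_2^{**}$. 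Collecting the three relations yields the announced system.
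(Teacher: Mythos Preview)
Your reduction of the two major-player best responses is correct and, through the substitution $a=u_1+1$, $b=u_2+1$ and the symmetric variables $s,d,p$, is actually cleaner than what the paper does: the paper plugs $u_2^*$ into the equation for $u_1^*$ by brute force, obtains the quadratic $u_1^2(c^2+c\bar\mu)+u_1(c^2+2c\bar\mu-c-\bar\mu+\bar\mu^2)+(-2c-1+c\bar\mu+\bar\mu^2)=0$, and then argues about the sign of the discriminant and of the two roots. Your factorisation through $K=(c+\bar\mu)(c+1-\bar\mu)$ makes the root selection (positive constant term times negative leading coefficient) transparent and explains at once why the radicands in \eqref{ne_u1}--\eqref{eq:ne_u2} take the form they do.

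There is, however, a genuine gap in your treatment of the minor player. You write the unconstrained stationarity condition $u^{c*}=\tfrac14(u_0^c+\bar\mu+u_1-u_2+1)$ and take expectations, but the admissible feedbacks map into $[0,1]$, so the true minimizer is the clipped version $\min(\max(0,\tfrac14(u_0^c+\bar\mu+u_1-u_2+1)),1)$. The fixed-point relation $\bar\mu=\mathbb{E}[u^{c*}]$ therefore reads, in general, as \[\bar\mu=\frac{(1-p_1-p_2)(u_1-u_2+\bar u_0^c+1)+4p_2}{4-(1-p_1-p_2)},\] where $p_1,p_2$ are the $\mu_0$-masses of the two clipping events. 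Obtaining \eqref{eq:ne_mu} requires showing $p_1=p_2=0$, and this is \emph{not} automatic: it depends on the size of $u_1-u_2$, which in turn depends on $\bar\mu$ through \eqref{ne_u1}--\eqref{eq:ne_u2}. The paper isolates this as a separate lemma, arguing case by case on the sign of $\bar\mu-\tfrac12$ (which controls the sign of $u_1^{**}-u_2^{**}$) and deriving contradictions from the clipped fixed-point formula when $p_1>0$ or $p_2>0$. Your proposal needs this step, or an equivalent bound on $|u_1^{**}-u_2^{**}|$ along the best-response curves, before the affine feedback and hence \eqref{eq:ne_mu} are justified.
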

	
	\begin{proof}[Proof of Proposition~\ref{prop:mfg_system}]
	\label{proof:mfg_system}
	The proof is consisted of 3 parts that are given above.
	\begin{enumerate}
	\item \textbf{Solution of 2-Major Player Game with Given Mean of Minor Player Control. } In this part, with the given mean of the minor players' controls, $\bar \mu$, we are analytically solving 2-player game of major players. For this reason, first we need to find best responses of major players, $\mathbb{R}_+ \ni u_{1}^*=\varphi^{1}(u_{2},\bar{\mu})$ and $\mathbb{R}_+ \ni u_{2}^*=\varphi^{2}(u_{1},\bar{\mu})$, that minimizes their cost functions.
	
	 \textbf{Remark:} For finding the controls that minimize the cost functions of major players, first order derivatives can be calculated. Although, since we deal with a constrained optimization, this minimizer may be out of the domain that the function is tried to be minimized. In this case, the minimizer would be on the boundary, this refers to case 2 in Figure~\ref{fig:DiffCaseMin}.
	
	 Moreover, when the cost functions of major players, \eqref{Eq:NEMajor1Cost} and \eqref{Eq:NEMajor2Cost} are checked, it can be seen that they are strictly convex in $u_1$ and $u_2$, respectively since it is assumed that $c > 0$. This means that we have unique minimizers. 
		
	\begin{figure}[t]
		\centering
		\begin{subfigure}[b]{0.4\linewidth}
			\includegraphics[width=\linewidth]{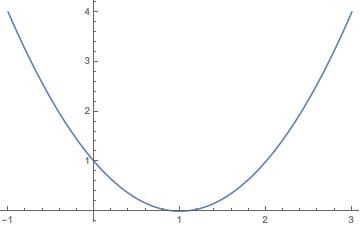}
			\caption{\textbf{Case 1:} Minimizer is in $\mathbb{R}_+$}
		\end{subfigure}
		\begin{subfigure}[b]{0.4\linewidth}
			\includegraphics[width=\linewidth]{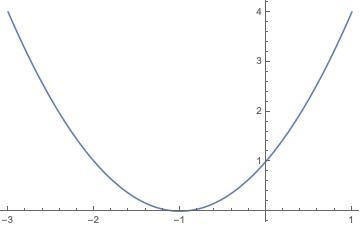}
			\caption{\textbf{Case 2:} Minimizer is out of $\mathbb{R}_+$}
		\end{subfigure}
		\caption{Different Cases for the Minimizers of Strictly Convex Functions}
		\label{fig:DiffCaseMin}
	\end{figure}
	 With above remark in our minds, first order conditions are calculated and minimizers are found as:
	\begin{align}
	u_{1}^*&=\dfrac{(1-\bar{\mu})+\frac{1}{u_{2}+1}}{c}\label{Map2Major1},\\
	u_{2}^*&=\dfrac{(\bar{\mu})+\frac{1}{u_{1}+1}}{c}\label{Map2Major2}.
	\end{align} 
	 In order to solve the 2-equation system, we plug $u_2$ into the equation of $u_1$ and have:
	\begin{equation}
	u_1^2 (c^2+c\bar{\mu})+u_1(c^2+2c\bar{\mu}-c-\bar{\mu}+\bar{\mu}^2)+(-2c-1+c\bar{\mu}+\bar{\mu}^2)=0.
	\end{equation}

	 The number of solutions of this equation depends on $\Delta = (c^2+2c\bar{\mu}-c-\bar{\mu}+\bar{\mu}^2)^2-4(c^2+c\bar{\mu})(-2c-1+c\bar{\mu}+\bar{\mu}^2)$. Since we have $\bar{\mu} \in [0,1]$ and $c>0$, it is concluded that $\Delta>0$ and we have 2 real-valued solutions. When they are analyzed further, it can be seen that in one of the solutions, $u_1, u_2 >0$ and in the other solution $u_1, u_2<0$. 
	
	 The set of positive solutions for $u^{**}_{1}=\phi^{1}(\bar{\mu})$ and $ u^{**}_{2}=\phi^{2}(\bar{\mu})$ are as following: 
	\begin{gather}
	u^{**}_{1}=-\frac{\bar{\mu}^2+2c\bar{\mu}-\bar{\mu}-c+c^2-\sqrt{(c+\bar{\mu})(c^2+5c-\bar{\mu}^2+\bar{\mu})(c-\bar{\mu}+1)}}{2c(c+\bar{\mu})}\label{eq:SolMajor1},\\
	u^{**}_{2}=\dfrac{\bar{\mu}-c+\sqrt{\frac{(c+\bar{\mu})(c^2+5c-\bar{\mu}^2+\bar{\mu})}{c-\bar{\mu}+1}}}{2c}\label{eq:SolMajor2}.    
	\end{gather}
		
	 In Figure~\ref{fig:ContMajMeth1}, plots of $u_1^{**}$ and $u^{**}_2$ under different	$\bar{\mu}$ and $c$ values can be found.
	\begin{figure}[h]
		\centering
		\begin{subfigure}[b]{0.9\linewidth}
			\includegraphics[width=\linewidth]{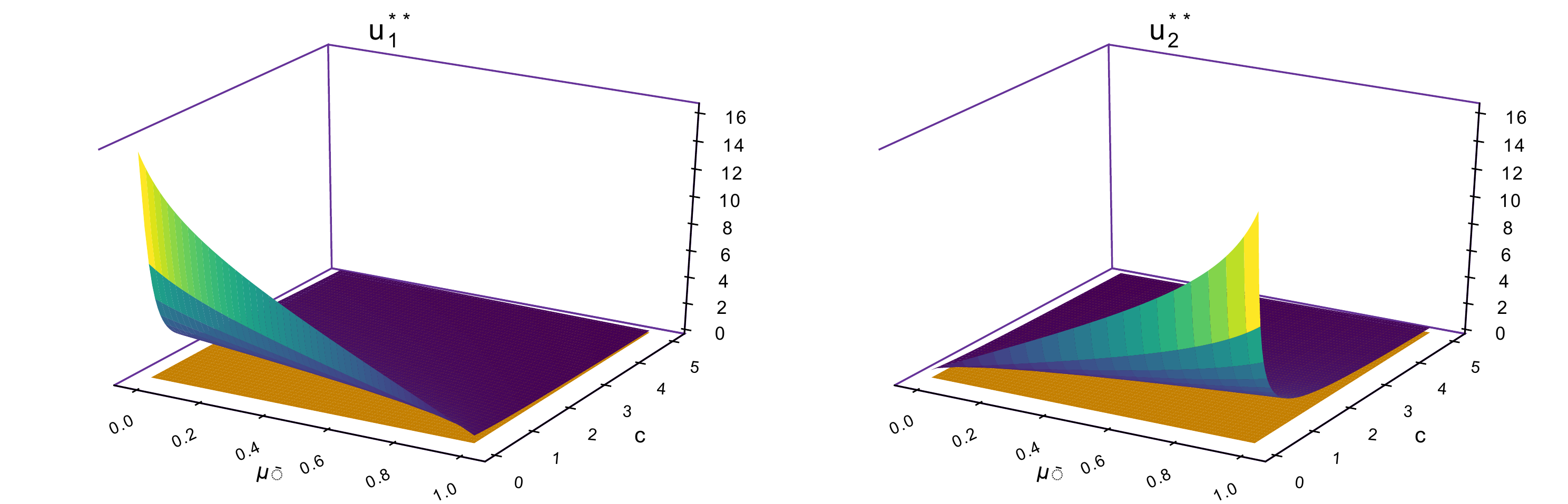}
		\end{subfigure}
		\caption{Control of Major Players under Different $\bar{\mu}$ and $c$ Values}
		\label{fig:ContMajMeth1}
	\end{figure}
	\item \textbf{Solution of the Constrained Optimization Problem of Minor Player. } In this section, with given controls for major players and the distribution of other minor players, we are solving the mean field game formulation of the optimization problem of minor players. Our goal is to find the mapping for the control of minor player that minimizes their cost function s.t.:
	\begin{equation}
	u^{c*} = \phi (\bar{\mu},u_1,u_2,u_{0}^c),
	\end{equation}
	where $u_1$ and $u_2$ denote the control of major players, $\bar{\mu}$ denotes the mean of the distribution of other minor players' control and random $ u_0^c \sim \mu_0 \in \mathcal{P}([0,1])$ denotes the initial preference rate of the minor player. Here, the control of minor player is a feedback function depending on their initial position, therefore in the cost function it is going to be denoted as $u^c(u_0^c)$.
	 The function that we want to minimize is as following:
            \begin{align*}
			J^c\left(u^c(.)\right)= & 
			\mathbb{E}_{u_0^c \sim \mu_0}\Bigg\{\dfrac{1}{2} \left(u^{c}(u_0^c)-u^{c}_{0}\right)^{2} +
			\dfrac{1}{2} \left(u^{c}(u_0^c)-\bar{\mu}\right)^{2} 
			- \Big[\left(\alpha+u_{1}\right)u^{c}(u_0^c) +\\
			&\left(\alpha+u_{2}\right)\left(1-u^{c}(u_0^c)\right)-\frac{\left(u^{c}(u_0^c)\right)^{2}+\left(1-u^{c}(u_0^c)\right)^{2}}{2}\Big]\Bigg\}\\
			= & \int_{0}^{1}\Bigg\{\dfrac{1}{2} \left(u^{c}(u_0^c)-u^{c}_{0}\right)^{2} +
			\dfrac{1}{2} \left(u^{c}(u_0^c)-\bar{\mu}\right)^{2} 
			- \Big[\left(\alpha+u_{1}\right)u^{c}(u_0^c) +\\
			&\left(\alpha+u_{2}\right)\left(1-u^{c}(u_0^c)\right)-\frac{\left(u^{c}(u_0^c)\right)^{2}+\left(1-u^{c}(u_0^c)\right)^{2}}{2}\Big]\Bigg\}d{\mu_0}(u^c_0).
	\end{align*}
	
	 Since the control of minor player is a feedback function, minimizing the integral can be done through minimizing the integrand. In other words, if we denote: 
\begin{align}\label{eq:MinorMinFunct}
g(u^c,u_0^c):=&\Bigg\{\dfrac{1}{2} \Big(u^{c}(u_0^c)-u^{c}_{0}\Big)^{2} + 
\dfrac{1}{2} \Big(u^{c}(u_0^c)-\bar{\mu}\Big)^{2} 
- \Big[\Big(\alpha+u_{1}\Big)u^{c}(u_0^c) +\\ &\Big(\alpha+u_{2}\Big)\Big(1-u^{c}(u_0^c)\Big)-\frac{\Big(u^{c}(u_0^c)\Big)^{2}+\Big(1-u^{c}(u_0^c)\Big)^{2}}{2}\Big]\Bigg\}.\nonumber
\end{align} 

Then:
\begin{align*}
\min_{u^c(u_0^c):[0,1]\rightarrow[0,1]} \int_{0}^{1} g(u^c(u_0^c),u_0^c)d{\mu_0}(u^c_0) \leftrightarrow \int_{0}^{1}\min_{u^c \in [0,1]} g(u^c,u_0^c)d\mu_0(u^c_0) 
\end{align*}	

	Now, we focus on minimizing \eqref{eq:MinorMinFunct}. Since this function is strictly convex in $u^c$, we may have three different cases for the minimizer as in Figure~\ref{DiffMinorOpt} given $u_0^c$. If the first order condition gives a minimizer that is smaller than 0 then the function is minimized at $u^c = 0$; on the other hand, if the first order condition gives a minimizer that is bigger than 1, then the function is minimized at $u^c = 1$, and in the other case, minimizer is found by first order condition.
	\begin{figure}
		\centering
		\includegraphics[width=0.5\linewidth]{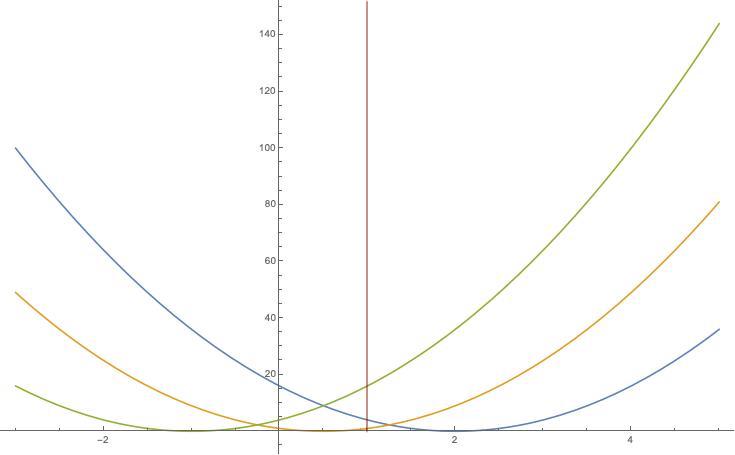}
		\caption{Different Possible Cases fo the Minimizers with Constraint $u_i^c \in [0,1]$}.
		\label{DiffMinorOpt}
	\end{figure}	

	When the first order condition is checked the minimizer is found to be:
	\begin{equation}
	\begin{aligned}\label{OptControlMinor}
	u^{c*} = \min\left(\max\left(0,\dfrac{\bar{\mu}+(u_1-u_2)+u^c_{0}+1}{4}\right),1\right). 
	\end{aligned}
	\end{equation}

	Now, we move to the fixed point argument part such that:
	\begin{equation}
		\bar{\mu} =\mathbb{E}\left[u^{c*}\right] = 	\mathbb{E}\left[\min\left(\max\left(0,\dfrac{\bar{\mu}+(u_1-u_2)+u^c_{0}+1}{4}\right),1\right)\right]. 
	\end{equation}

	Now, we assume that there exists $u_0^c$ such that $\frac{\bar \mu + (u_1-u_2) +u_0^c +1}{4} \notin [0,1]$. Further we define following sets:
\begin{equation*}
\begin{aligned}
    \mathcal A_< := \Big\{u_0^c : \dfrac{\bar \mu + (u_1-u_2) +u_0^c +1}{4}<0\Big\},\\
    \mathcal A_> := \Big\{u_0^c : \dfrac{\bar \mu + (u_1-u_2) +u_0^c +1}{4}>1\Big\},
    \end{aligned}
\end{equation*}
with the following probability measures.
\begin{equation*}
\begin{aligned}
    \mu_0\big[u_0^c \in \mathcal A_<\big]&=p_1,\qquad
    \mu_0\big[u_0^c \in \mathcal A_>\big]&=p_2.
    \end{aligned}
\end{equation*}

Intuitively this means that given $\bar \mu \in[0,1], u_1\in\mathbb{R}_+, u_2\in\mathbb{R}_+$, there may exist some $u_0^c$ that makes the expression $\frac{\bar \mu + (u_1-u_2) +\alpha +1}{4}$ smaller than 0 or bigger than 1 and their probability mass is given as $p_1\in[0,1]$ and $p_2\in[0,1]$, respectively. With this assumption the fixed point argument gives us:

\begin{equation}
\label{eq:mfg_fixed}
    \bar \mu = \frac{(1-p_1-p_2)(u_1-u_2 + \bar{u}_0^c +1) +4p_2}{4-(1-p_1-p_2)}.
\end{equation}

    \item \textbf{Solution of the System. } 
    
    \begin{lemma}
    \label{lem:fixed_mfg}
        For any given $u_0^c$, we have that  
        \begin{equation}
            0 \leq \dfrac{\bar{\mu}+(u_1-u_2)+u^c_{0}+1}{4} \leq1.
        \end{equation} In other words, we have $p_1=p_2=0$.
    \end{lemma}
    \begin{proof}[Proof of Lemma~\ref{lem:fixed_mfg}]
    First, we realize that any given $c$, the sign of $u_1-u_2$ depends on $\bar \mu$. If $\bar\mu<0.5(>0.5)$, we have $u_1-u_2>0(<0)$. Furthermore, if $\bar \mu=0.5$, we have $u_1-u_2=0$.
    
    \textbf{First, we assume that $\bar \mu=0.5$.} Then we have $0\leq\frac{\bar \mu +u_1-u_2+u_0^c+1}{4}\leq1$, by contradiction we find that $p_1=p_2=0$.
    
    \textbf{Secondly, we assume that $0\leq\bar \mu <0.5$.} In this case, we have $u_1-u_2>0$ and there does not exist $u_0^c\in[0,1]$ that gives $\frac{\bar{\mu}+u_1-u_2+u^c_{0}+1}{4}<0$. Therefore, we conclude that $p_1=0$. Now we focus on $p_2$. If $p_2\neq0$, it means that there exists some $u_0^c\in[0,1]$ that gives $\frac{\bar{\mu}+u_1-u_2+u^c_{0}+1}{4}>1$. From here we see that $u_1-u_2$ should be bigger than 1.5 and $x:=u_1-u_2+\bar u_0^c + 1 >2.5$. By using equation~\eqref{eq:mfg_fixed}, we have:
    \begin{equation*}
        \begin{aligned}
        &0\leq\frac{(1-p_2)(u_1-u_2 + \bar{u}_0^c +1) +4p_2}{4-(1-p_2)}<0.5\\
        &0\leq(1-p_2)(u_1-u_2 + \bar{u}_0^c +1) +4p_2<1.5+0.5p_2\\
        &0\leq (1-p_2)x + 4p_2<1.5+0.5p_2.
        \end{aligned}
    \end{equation*}
    Here since and $x>2.5$ and $1.5+0.5p_2<2$, we have a contradiction. Therefore, we also conclude that $p_2=0$. 
    
    \textbf{Finally, we assume that  $0.5<\bar \mu \leq1$.} In this case, we have $u_1-u_2<0$ and there does not exist $u_0^c\in[0,1]$ that gives $\frac{\bar{\mu}+u_1-u_2+u^c_{0}+1}{4}>1$. Therefore, we conclude that $p_2=0$. Now we focus on $p_1$. If $p_1\neq0$, it means that there exists some $u_0^c\in[0,1]$ that gives $\frac{\bar{\mu}+u_1-u_2+u^c_{0}+1}{4}<0$. From here we see that $u_1-u_2$ should be smaller than -1.5 and $x:=u_1-u_2+\bar u_0^c + 1 <0.5$.
    By using equation~\eqref{eq:mfg_fixed}, we have:
    \begin{equation*}
        \begin{aligned}
        0.5&<\frac{(1-p_1)(u_1-u_2 + \bar{u}_0^c +1)}{4-(1-p_1)}\leq1\\
        1.5+0.5p_1&<(1-p_1)(u_1-u_2 + \bar{u}_0^c +1) \leq3+p_1\\
        1.5+0.5p_1&< (1-p_1)x \leq 3+p_1.
        \end{aligned}
    \end{equation*}
    Since $c<0.5$ and $1.5+0.5p_1>1.5$, we have a contradiction. Therefore, we also conclude that $p_1=0$

    \end{proof}
    By using Lemma~\ref{lem:fixed_mfg}, we can calculate $\bar \mu$ and the final system becomes:
    
    \begin{gather}
		u^{**}_{1}=-\frac{\bar{\mu}^2+2c\bar{\mu}-\bar{\mu}-c+c^2-\sqrt{(c+\bar{\mu})(c^2+5c-\bar{\mu}^2+\bar{\mu})(c-\bar{\mu}+1)}}{2c(c+\bar{\mu})},\nonumber\\
		u^{**}_{2}=\dfrac{\bar{\mu}-c+\sqrt{\frac{(c+\bar{\mu})(c^2+5c-\bar{\mu}^2+\bar{\mu})}{c-\bar{\mu}+1}}}{2c},\label{eq:NESolSystem}\\
		\bar{\mu}=\dfrac{u_1^{**}-u_2^{**}+1+\bar u_0^c}{3}.		\nonumber
	\end{gather}
	 where the unit cost of advertisement, $c$, and the distribution of the initial control of minor players, $\bar u_0^c$, are given.

	\end{enumerate}

	\end{proof}
	
	\begin{theorem} \label{theorem:mfg_exist_uniq}
	There is a unique Nash Equilibrium in the Mean Field Game with Multiple Major Players.
	\end{theorem}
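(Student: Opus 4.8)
The plan is to collapse the three coupled relations of Proposition~\ref{prop:mfg_system} into a single scalar fixed‑point equation in the mean field $\bar\mu\in[0,1]$ and then to prove that this scalar equation has exactly one solution. By Lemma~\ref{lem:fixed_mfg} the minor players' consistency condition is the affine relation $\bar\mu=(u_1^{**}-u_2^{**}+1+\bar u_0^c)/3$; substituting the explicit major‑player equilibrium maps $u_1^{**}=\phi^1(\bar\mu)$ and $u_2^{**}=\phi^2(\bar\mu)$ from \eqref{eq:SolMajor1}–\eqref{eq:SolMajor2}, the entire system reduces to $\bar\mu=F(\bar\mu)$ with
\[
F(\bar\mu):=\frac{\phi^1(\bar\mu)-\phi^2(\bar\mu)+1+\bar u_0^c}{3}.
\]
Since the major best responses are unique minimizers of strictly convex costs and the minor feedback \eqref{OptControlMinor} is the unique minimizer of a strictly convex integrand, the equilibrium tuple is unique if and only if $F$ has a unique fixed point in $[0,1]$.

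First I would check that $F$ is well defined and continuous on $[0,1]$. For $\bar\mu\in[0,1]$ and $c>0$ one has $c-\bar\mu+1\geq c>0$, $c+\bar\mu>0$, and $c^2+5c-\bar\mu^2+\bar\mu\geq c^2+5c>0$ because $\bar\mu(1-\bar\mu)\geq 0$; hence every radicand in \eqref{eq:SolMajor1}–\eqref{eq:SolMajor2} is strictly positive, the maps $\phi^1,\phi^2$ are smooth and nonnegative, and $F$ is smooth and real‑valued.

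The heart of the argument is monotonicity. I would show that $\phi^1$ is strictly decreasing and $\phi^2$ strictly increasing in $\bar\mu$, so that $\phi^1-\phi^2$, and therefore $F$, is strictly decreasing. The cleanest route avoids differentiating the radicals: the best‑response identities \eqref{Map2Major1}–\eqref{Map2Major2} read $c\,u_1=(1-\bar\mu)+1/(u_2+1)$ and $c\,u_2=\bar\mu+1/(u_1+1)$, in which the direct dependence on $\bar\mu$ lowers $u_1$ and raises $u_2$; a comparative‑statics argument on this $2\times 2$ system transfers these signs to the equilibrium maps $\phi^1,\phi^2$. Granting strict monotonicity, the function $G(\bar\mu):=F(\bar\mu)-\bar\mu$ is continuous and strictly decreasing, hence has at most one zero; checking the boundary signs $G(0)\geq 0$ and $G(1)\leq 0$ (equivalently $F([0,1])\subseteq[0,1]$, using that $F(0.5)=(1+\bar u_0^c)/3\in[1/3,2/3]$ as a sanity anchor) then yields existence by the intermediate value theorem and uniqueness from strict monotonicity. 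The resulting $\bar\mu^*$ feeds back through $\phi^1,\phi^2$ to the unique pair $(u_1^*,u_2^*)$ and through \eqref{OptControlMinor} to the unique minor feedback $u^{c*}$, giving the unique Nash equilibrium tuple.

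The main obstacle I anticipate is precisely this monotonicity step: because the two major players' reactions are strategic substitutes—each best response is decreasing in the opponent's control—the comparative statics of the coupled system in $\bar\mu$ are not automatic, and the closed forms \eqref{eq:SolMajor1}–\eqref{eq:SolMajor2} are cumbersome to differentiate. I would resolve it either by a sign analysis of the Jacobian of the best‑response system \eqref{Map2Major1}–\eqref{Map2Major2} (verifying it is invertible with comparative‑statics signs forcing $(\phi^1)'<0<(\phi^2)'$), or, failing a clean implicit argument, by direct differentiation of \eqref{eq:SolMajor1}–\eqref{eq:SolMajor2} and algebraic confirmation that $\phi^1$ decreases and $\phi^2$ increases throughout $[0,1]$.
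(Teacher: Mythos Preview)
Your proposal is correct and follows essentially the paper's strategy: collapse the system of Proposition~\ref{prop:mfg_system} to the scalar fixed-point equation $\bar\mu=F(\bar\mu)$ and show that $\bar\mu\mapsto\bar\mu-F(\bar\mu)$ is strictly increasing on $[0,1]$, which gives uniqueness, with existence coming from the intermediate value theorem. The only difference is in how the monotonicity is established. The paper takes precisely your fallback route, differentiating the closed forms \eqref{eq:SolMajor1}--\eqref{eq:SolMajor2} directly and exhibiting an explicit expression for $f'(\bar\mu)$ that is manifestly positive for $c>0$ and $\bar\mu\in[0,1]$. Your preferred comparative-statics argument on the best-response pair \eqref{Map2Major1}--\eqref{Map2Major2} also goes through and sidesteps the radical algebra: differentiating the system in $\bar\mu$ yields a $2\times2$ linear system whose determinant $c^2-(u_1+1)^{-2}(u_2+1)^{-2}$ is strictly positive (each best-response identity forces $c(u_j+1)>1/(u_{3-j}+1)$, hence $c^2(u_1+1)^2(u_2+1)^2>1$), and Cramer's rule then gives $(\phi^1)'<0<(\phi^2)'$ directly. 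Either route closes the argument.
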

	\begin{proof}[Proof of Theorem~\ref{theorem:mfg_exist_uniq}]
	Since the system given in \eqref{eq:NESolSystem} gives the Nash Equilibrium solution in the Mean Field Game with Multiple Major Players it is enough to show that the system has a unique solution. If the 3-equation system in \eqref{eq:NESolSystem} has a solution, we can conclude that there is existence of the solution to this game. 
	
	The existence of the solution of this system can be showed by plugging in $u^{**}_1$ and $u^{**}_2$ values in the equation for $\bar \mu$. For any cost of advertisement $c$, we can see that for $\bar u_0^c\in [0, 0.5)$, there exists a $\bar \mu$ such that $\bar \mu \in [\frac{1+\bar u_0^c}{3}, 0.5]$ and for $\bar u_0^c\in (0.5, 1]$, there exists a $\bar \mu$ such that $\bar \mu \in [0.5, \frac{1+\bar u_0^c}{3}]$. Further we realize that if $\bar u_0^c=0.5$, we have $\bar \mu=0.5$ for any cost of advertisement. 
		
	\textcolor{black}{In order to show the uniqueness, we need to show that there is a unique fixed point for $\bar \mu$. Realize that after plugging in $u^{**}_1$ and $u^{**}_2$ values $\bar \mu$ can be found by solving the following equation:
	\begin{equation*}
	\begin{aligned}
	    f(\bar \mu) =\bar\mu-\frac{1}{3}\Big[&-\frac{\bar{\mu}^2+2c\bar{\mu}-\bar{\mu}-c+c^2-\sqrt{(c+\bar{\mu})(c^2+5c-\bar{\mu}^2+\bar{\mu})(c-\bar{\mu}+1)}}{2c(c+\bar{\mu})}\\
	    & -\dfrac{\bar{\mu}-c+\sqrt{\frac{(c+\bar{\mu})(c^2+5c-\bar{\mu}^2+\bar{\mu})}{c-\bar{\mu}+1}}}{2c}\Big]-\frac{1}{3}-\frac{\bar u_0^c}{3} =0
	\end{aligned}
	\end{equation*}	
	If we show that $f(\bar \mu)$ is strictly increasing or decreasing in $\bar \mu$ where $\bar \mu\in [0,1]$, the solution is unique. Therefore, we check the derivative of $f(\bar \mu)$:
	\begin{equation*}
	    \begin{aligned}
	    f^{\prime}(\bar \mu) = 1&+\frac{1}{3}\sqrt{(c^2+5c-\bar\mu^2+\mu)(c^2+c-\bar\mu^2+\mu)^{-1}}\\
	    &+\frac{(2\bar\mu-1)^2}{12c}\bigg((c^2+5c-\bar\mu^2+\mu)(c^2+c-\bar\mu^2+\mu)\bigg)^{-1/2}\\
	    &\times \bigg((c^2+5c-\bar\mu^2+\mu)(c^2+c-\bar\mu^2+\mu)^{-1}-1\bigg)
	    \end{aligned}
	\end{equation*}
	Since $c>0$ and $\bar\mu\in[0,1]$ we have $(c^2+5c-\bar\mu^2+\mu)>0$, $(c^2+c-\bar\mu^2+\mu)>0$ and $(c^2+5c-\bar\mu^2+\mu)(c^2+c-\bar\mu^2+\mu)^{-1}>1$. Therefore, $f(\bar\mu)$ is strictly increasing which concludes the uniqueness of the fixed point for $\bar \mu$. Since we previously showed that $u^{**}_1$ and $u^{**}_2$ are determined uniquely for any given $\bar \mu$, we conclude that we have a unique Nash equilibrium.} 
	
	\end{proof}

\subsection{Multi-Leader-Follower Nash Equilibrium in Mean Field Game with Major Players}
	In this setting, major players are playing a two-player game assuming that minor players are rational and constructing a Nash Equilibrium among themselves by taking into account the Mean Field Game Equilibrium of the minor players. Here, major players are in a sense Stackelbergian; however, we don't have a sequential game, major and minor players are giving their responses still simultaneously. At the end Multi-Leader-Multi-Follower Nash Equilibrium is found.

	\begin{enumerate}		
		\item We assume that major players think minor players are rational and constructing a Nash Equilibrium among themselves. We take major players' 2-player game equilibrium controls as $u_1$ and $u_2$.
		\item We fix $ \bar{\mu} $, and assume that minor players are giving the best response to the controls of major players:
		\begin{itemize}[label=$\rightarrow$]
			\item By considering the limit $N \rightarrow \infty$, we solve the following mean field game problem for the representative Minor Player, where $ u_{0}^c $ is the initial control of the minor player which is random:
			\begin{itemize}
			    \item We find $u^{c*}(u_0^c,\bar{\mu},u_1,u_2)$ $s.t:$		
			\begin{equation}
			\begin{aligned} \label{Eq:AlternatingMinorProblem}
			&u^{c*} = \argmin_{u^{c}: [0,1]\rightarrow[0,1]} \mathbb{E}_{u_0^c \sim \mu_0}\Bigg\{\dfrac{1}{2} \left(u^{c}-u^{c}_{0}\right)^{2} + 
			\dfrac{1}{2} \left(u^{c}-\bar{\mu}\right)^{2} \\
			&- \left[\left(\alpha+u_{1}\right)u^{c} + \left(\alpha+u_{2}\right)\left(1-u^{c}\right)-\dfrac{\left(u^{c}\right)^{2}+\left(1-u^{c}\right)^{2}}{2}\right]\Bigg\}.
			\end{aligned}
			\end{equation}			
			\end{itemize}
			\item We apply Fixed Point Argument: Find $\bar{\mu}(u_1,u_2) \textit{ s.t. } \bar{\mu} = \mathbb{E}[u^{c*}]$
		\end{itemize}

		\item Given the MFG equilibrium of minor players, $\bar{\mu}(u_1,u_2)$, we solve 2-player game of Major Players:
		\begin{itemize}[label=$\rightarrow$]
			\item First, we find best response of major player as a function of the other major player's control:
			\begin{itemize}
				\item For major {player} 1, find $u_1^*=\varphi^1(u_2)$ $s.t:$
				\begin{multline}
				\label{Eq:AlternatingMajor1Problem}
				u_1^* = \argmin_{u_{1} \in \mathbb{R_+}} \Bigg\{ -\left(u_{1}\left(1-\bar{\mu}\left(u_1,u_2\right)\right)-u_{2}\bar{\mu}\left(u_1,u_2\right)\right)-\left(\dfrac{u_{1}+1}{u_{2}+1}\right) + \dfrac{c}{2}u_{1}^{2} \Bigg\}.
				\end{multline}
				\item For major {player} 2, find $u_2^*=\varphi^2(u_1)$ $s.t:$
				\begin{multline}\label{Eq:AlternatingMajor2Problem}
				u_2^*= \argmin_{u_{2} \in \mathbb{R_+}} \Bigg\{ -\left(u_{2}\bar{\mu}\left(u_1,u_2\right)-u_{1}\left(1-\bar{\mu}\left(u_1,u_2\right)\right)\right)-\left(\dfrac{u_{2}+1}{u_{1}+1}\right) + \dfrac{c}{2}u_{2}^{2} \Bigg\}.
				\end{multline}
			\end{itemize}
			\item Then we solve the 2-equation system of $u_1^*=\varphi^1(u_2)$ and $u_2^*=\varphi^2(u_1)$ to find the equilibrium controls of major players in the 2-player game. 
		\end{itemize} 
		\item Finally we have a solution of the following form:	
		\begin{equation}
		\begin{aligned}\begin{cases}
		\bar{\mu} &= f_1\big(\mathbb{E}[u_{0}^c]\big),\\
		u_1^{**} &= f_2\big(\mathbb{E}[u_{0}^c],c\big),\\
		u_2^{**} &= f_3\big(\mathbb{E}[u_{0}^c],c\big).\end{cases}
		\end{aligned} 
		\end{equation}
	\end{enumerate}
	
	\begin{proposition} 
	\label{prop:mlfne}
	The final system for the Multi-Leader-Follower Nash Equilibrium in our model is given as
	\begin{equation}
    \begin{aligned}\label{eq:mlfne_system}
    u_1^{**} =& \dfrac{1}{3+3c+\mathbb{E}[u^{c}_0]}\Bigg\{1-2\mathbb{E}[u^{c}_0]+\dfrac{(1+3c-\mathbb{E}[u^{c}_0]-\Delta)(\mathbb{E}[u^{c}_0]-3c-4)}{2(2+3c)}\Bigg\},\\
    \\ 
    u_2^{**} =& \dfrac{-1-3c+\mathbb{E}[u^{c}_0]+\Delta}{2(2+3c)},
    \\
    \bar{\mu} =& \dfrac{1}{3}\Bigg\{ \dfrac{1}{3+3c+\mathbb{E}[u^{c}_0]}\Big\{1-2\mathbb{E}[u^{c}_0]+\dfrac{(1+3c-\mathbb{E}[u^{c}_0]-\Delta)(\mathbb{E}[u^{c}_0]-3c-4)}{2(2+3c)}\Big\},\\
    &\qquad\qquad\qquad\qquad\qquad\qquad\qquad\ -\dfrac{-1-3c+\mathbb{E}[u^{c}_0]+\Delta}{2(2+3c)}+1+\mathbb{E}[u^{c}_0]\Bigg\},
    \end{aligned} 
    \end{equation}
    where
    $$
    \Delta:=\sqrt{\dfrac{(3+3c+\mathbb{E}[u^{c}_0])(36+57c+9c^2+\mathbb{E}[u^{c}_0]-\mathbb{E}[u^{c}_0]^2)}{4+3c-\mathbb{E}[u^{c}_0]}}.
    $$
	\end{proposition}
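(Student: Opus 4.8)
The plan is to follow the three-step program laid out before the statement, exploiting the fact that in the MLF-NE ordering the followers' (minor players') problem is solved \emph{first}, producing the mean field as an explicit function of the leaders' controls, which is then fed back into the two leaders' best responses.

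First I would dispose of the follower problem. The representative consumer's objective in \eqref{Eq:AlternatingMinorProblem} is identical to the one treated in Part 2 of the proof of Proposition~\ref{prop:mfg_system}, so the same first-order analysis applies: the integrand is strictly convex in $u^c$, and the (clipped) minimizer is
\[
u^{c*} = \min\Big(\max\Big(0,\tfrac{\bar\mu + (u_1-u_2) + u_0^c + 1}{4}\Big),1\Big).
\]
Invoking Lemma~\ref{lem:fixed_mfg} to conclude that the truncation is inactive ($p_1=p_2=0$), the fixed-point condition $\bar\mu = \mathbb{E}[u^{c*}]$ collapses to $3\bar\mu = (u_1-u_2) + \mathbb{E}[u^c_0] + 1$, i.e.
\[
\bar\mu(u_1,u_2) = \frac{u_1 - u_2 + \mathbb{E}[u^c_0] + 1}{3}.
\]
This is the crucial output of the follower stage: an explicit affine dependence of the mean field on the two advertising efforts.

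Next I would substitute $\bar\mu(u_1,u_2)$ into the two leaders' functionals \eqref{Eq:AlternatingMajor1Problem}--\eqref{Eq:AlternatingMajor2Problem} and differentiate, using $\partial_{u_1}\bar\mu = \tfrac13$ and $\partial_{u_2}\bar\mu = -\tfrac13$. Because the substituted costs retain a positive quadratic coefficient $\tfrac13 + \tfrac c2$ in the own control, each remains strictly convex, so the first-order conditions are necessary and sufficient for the unconstrained minimizers, and one checks the minimizers are positive so the $\mathbb{R}_+$ constraint is inactive. Writing $a := 2+3c$, the stationarity conditions reduce to the coupled best-response system
\[
a\,u_1 = \frac{3}{u_2+1} + 2 - \mathbb{E}[u^c_0], \qquad a\,u_2 = \frac{3}{u_1+1} + \mathbb{E}[u^c_0] + 1.
\]

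The main obstacle is solving this coupled nonlinear system, and the decisive simplification is to change variables to $v_1 := u_1 + 1$, $v_2 := u_2 + 1$ and set $P := 4+3c-\mathbb{E}[u^c_0]$, $Q := 3+3c+\mathbb{E}[u^c_0]$. Multiplying the first relation by $v_2$ and the second by $v_1$ expresses the common product $a\,v_1 v_2$ in two ways, namely $a v_1 v_2 = 3 + P v_2 = 3 + Q v_1$; subtracting yields the linear relation $Q v_1 = P v_2$. Eliminating $v_1$ then turns the system into the single quadratic $a v_2^2 - Q v_2 - 3Q/P = 0$, whose product of roots $-3Q/(aP)$ is negative, so exactly one root is positive; selecting it gives $v_2 = (Q + \Delta)/(2a)$ with $\Delta = \sqrt{Q^2 + 12aQ/P}$. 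The only remaining work is the routine check that $Q^2 + 12aQ/P$ equals the stated radicand, which follows from $PQ + 12a = 36 + 57c + 9c^2 + \mathbb{E}[u^c_0] - \mathbb{E}[u^c_0]^2$. Finally $u_2^{**} = v_2 - 1$ reproduces the second formula, $u_1^{**} = (P/Q)v_2 - 1$ reproduces the first after simplification (using $Q + 1 + 3c - \mathbb{E}[u^c_0] = 2a$ and $P - Q = 1 - 2\mathbb{E}[u^c_0]$), and substituting both back into $\bar\mu(u_1,u_2)$ yields the third. I expect the quadratic reduction and the verification of $\Delta$ to be the only genuinely delicate points; everything else is bookkeeping.
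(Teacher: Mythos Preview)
Your overall strategy matches the paper's: solve the follower fixed point first to obtain $\bar\mu(u_1,u_2)$, substitute into the leaders' objectives, and solve the resulting two-player system. There is, however, a genuine gap in your justification that the truncation is inactive. Lemma~\ref{lem:fixed_mfg} cannot be invoked here: its proof relies on the specific NE best-response relations \eqref{Map2Major1}--\eqref{Map2Major2}, in particular on the sign relationship between $u_1-u_2$ and $\bar\mu-\tfrac12$ that those relations induce. At the stage where you invoke it, $u_1$ and $u_2$ are still arbitrary leader controls, and for large $|u_1-u_2|$ the clipping in $u^{c*}$ is certainly active, so $\bar\mu(u_1,u_2)$ is not globally the affine function you write down. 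The paper handles this correctly by carrying $p_1,p_2$ through the leaders' first-order conditions and then proving a separate result (Lemma~\ref{lemma:inequality_mlf}) that uses the MLF best responses themselves to establish $|u_1-u_2|<1$, hence $p_1=p_2=0$. You need either that argument or an a~posteriori check that your computed $u_1^{**},u_2^{**}$ satisfy $|u_1^{**}-u_2^{**}|<1$; without it the leaders' FOCs you write down are not a~priori the correct stationarity conditions for \eqref{Eq:AlternatingMajor1Problem}--\eqref{Eq:AlternatingMajor2Problem}.

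On the positive side, your explicit reduction via $v_j=u_j+1$, the linear relation $Qv_1=Pv_2$, and the resulting quadratic $a v_2^2 - Q v_2 - 3Q/P=0$ is cleaner and more informative than the paper's treatment, which merely asserts that the system~\eqref{Eq:2EqSystMeth2} has one positive and one negative solution set without displaying the reduction. Your identities $PQ+12a=36+57c+9c^2+\mathbb{E}[u^c_0]-\mathbb{E}[u^c_0]^2$, $P-Q=1-2\mathbb{E}[u^c_0]$, and $Q+1+3c-\mathbb{E}[u^c_0]=2a$ are exactly what is needed to recover the stated formulas, and they make transparent why the positive root is the relevant one.
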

	
	\begin{proof}[Proof of Proposition~\ref{prop:mlfne}]
		Here, the Mean Field Game solution of the minor player stays the same, remember that we have the following minimizer for the cost function of the minor player:
    \begin{equation}\label{OptControlMinor_2}
    \begin{aligned}
    u^{c*} = \min\left(\max\Big(0,\dfrac{\bar{\mu}+(u_1-u_2)+u^c_{0}+1}{4}\Big),1\right) .
    \end{aligned}
    \end{equation}
    By using the fixed point argument we find
    \begin{equation}
    \label{eq:mlf_fixed}
        \bar \mu(u_1,u_2) = \frac{(1-p_1-p_2)(u_1-u_2 + \bar{u}_0^c +1) +4p_2}{4-(1-p_1-p_2)},
    \end{equation}
    where $p_1$ and $p_2$ are defined as in proof of Proposition~\ref{prop:mfg_system}. 
    
    After, this $\bar{\mu}$ is plugged into the cost functions of major players, firstly it is checked whether the cost functions of major players are strictly convex in their own controls. Since for the Second Order Condition we have SOC$=\frac{2(1-p_1-p_2)}{4-(1-p_1-p_2)}+c>0$, it is concluded that, cost function of major player 1 \eqref{Eq:AlternatingMajor1Problem} is strictly convex in $u_1$ and cost function of major player 2 \eqref{Eq:AlternatingMajor2Problem} is strictly convex in $u_2$. After first order condition is checked, it is concluded that we have the following minimizers:
	\begin{equation}
	\begin{aligned}
	u_1^* &= \left.\Big[\frac{1}{u_2+1}+\frac{4(1-p_2)-(1-p_1-p_2)(2+\bar u^c_0)}{4-(1-p_1-p_2)}\Big]\middle/\Big[c+\frac{2(1-p_1-p_2)}{4-(1-p_1-p_2)}\Big],\right.\\[3mm] 
	u_2^* &= \left.\Big[\frac{1}{u_1+1}+\frac{4p_2+(1-p_1-p_2)(1+\bar u^c_0)}{4-(1-p_1-p_2)}\Big]\middle/\Big[c+\frac{2(1-p_1-p_2)}{4-(1-p_1-p_2)}\Big].\right.
	\end{aligned} 
	\end{equation}
    
    \begin{lemma}\label{lemma:inequality_mlf}
    For any given $u_0^c$, we have that  
        \begin{equation}
            0 \leq \dfrac{\bar{\mu}+(u_1-u_2)+u^c_{0}+1}{4} \leq1.
        \end{equation} In other words, we have $p_1=p_2=0$.
    \end{lemma}
    
\begin{proof}[Proof of Lemma~\ref{lemma:inequality_mlf}]
  First we realize that since $u_0^c\in[0,1]$, given $\bar \mu, u_1,u_2$, we cannot have $p_1\neq0$ and $p_2\neq0$ simultaneously. Therefore, we look at two cases and show contradictions in these cases.
  
  \textbf{First we assume that} $p_1\neq0, p_2=0$. In this case, when the two player system of major players in \ref{Eq:2EqSystMeth2} is solved, we realize given any $p_1>0, c>0$ and $\mathbb{E}[u_0^c]$, we have $u_1-u_2>-1$. Since $\bar \mu \in[0,1]$, we conclude that $\bar{\mu}+(u_1-u_2)+u^c_{0}+1>0$ for all $u_0^c\in[0,1]$; therefore, we conclude that $p_1=0$ which contradicts with our initial assumption. 
  
    \textbf{Secondly, we assume that} $p_1=0, p_2\neq0$. In this case, when the two player system of major players in \ref{Eq:2EqSystMeth2} is solved, we realize given any $p_1>0, c>0$ and $\mathbb{E}[u_0^c]$, we have $u_1-u_2<1$. Since $\bar \mu \in[0,1]$, we conclude that $\bar{\mu}+(u_1-u_2)+u^c_{0}+1<4$ for all $u_0^c\in[0,1]$; therefore, we conclude that $p_2=0$ which contradicts with our initial assumption. 
\end{proof}
    
	By using Lemma~\ref{lemma:inequality_mlf}, we conclude that:	
	\begin{equation}
	\begin{aligned}
	\bar{\mu}(u_1,u_2)=\dfrac{u_1-u_2+1+\mathbb{E}[u^{c}_0]}{3}.
	\end{aligned}
	\end{equation}
	Further again by using Lemma~\ref{lemma:inequality_mlf}, we can rewrite the minimizers for the major players' cost functions as follows:
	\begin{equation}
	\begin{aligned}\label{Eq:2EqSystMeth2}
	u_1^* = \frac{2u_2-\mathbb{E}[u^{c}_0]u_2-\mathbb{E}[u^{c}_0]+5}{3cu_2+3c+2u_2+2},\\[3mm]
	u_2^* = \frac{u_1+\mathbb{E}[u^{c}_0]u_1+\mathbb{E}[u^{c}_0]+4}{3cu_1+3c+2u_1+2}.
	\end{aligned} 
	\end{equation}

     Now, the 2-equation system \eqref{Eq:2EqSystMeth2} needs to be solved in order to find the equilibrium. When the solutions are checked, we saw that there exist 2 sets of solutions; one positive and one negative set. Because of the nonnegativity assumption in the controls of the major players, we conclude that the positive set gives the unique optimal control for the major players.
     	\end{proof}

	\begin{theorem} 
	\label{the:mlfne_existence}
	There exists a unique Multi-Leader-Follower Nash Equilibrium.
	\end{theorem}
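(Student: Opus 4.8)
The plan is to follow the template of Theorem~\ref{theorem:mfg_exist_uniq}. By Proposition~\ref{prop:mlfne}, a Multi-Leader-Follower Nash Equilibrium is precisely an admissible solution of the system \eqref{eq:mlfne_system}, where admissibility means $\bar\mu\in[0,1]$ and $u_1^{**},u_2^{**}\in\mathbb{R}_+$; hence it suffices to show that \eqref{eq:mlfne_system} has a unique admissible solution. The key structural simplification compared with the Nash case is that here the system is already fully resolved --- $\bar\mu$, $u_1^{**}$ and $u_2^{**}$ are written explicitly in terms of the data $\mathbb{E}[u_0^c]$ and $c$ through the single auxiliary quantity $\Delta$ --- so, unlike in the proof of Theorem~\ref{theorem:mfg_exist_uniq}, there is no residual coupled fixed point for $\bar\mu$ to analyze, and no monotonicity argument is needed.

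For existence I would first confirm that the candidate \eqref{eq:mlfne_system} is well defined and then that it is admissible. Using $c>0$ and $\mathbb{E}[u_0^c]\in[0,1]$, the denominator inside $\Delta$ satisfies $4+3c-\mathbb{E}[u_0^c]\geq 3+3c>0$, while the numerator is the product of $3+3c+\mathbb{E}[u_0^c]>0$ and $36+57c+9c^2+\mathbb{E}[u_0^c]-\mathbb{E}[u_0^c]^2>0$ (the last factor being positive since $\mathbb{E}[u_0^c]-\mathbb{E}[u_0^c]^2=\mathbb{E}[u_0^c](1-\mathbb{E}[u_0^c])\geq 0$); thus the radicand is strictly positive and $\Delta\in\mathbb{R}_+$ is finite. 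It then remains to check the sign conditions $u_1^{**}\geq 0$, $u_2^{**}\geq 0$ and $\bar\mu\in[0,1]$. This feasibility verification, which amounts to bounding the explicit expressions containing $\Delta$, is the step I expect to be the main obstacle, exactly as the selection and feasibility of the positive root was the delicate point in Proposition~\ref{prop:mlfne}.

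For uniqueness I would reassemble the argument already contained in the proof of Proposition~\ref{prop:mlfne}. Given $(u_1,u_2,\bar\mu)$ the representative minor player's cost is strictly convex in $u^c$, so the minimizer \eqref{OptControlMinor_2} is unique; Lemma~\ref{lemma:inequality_mlf} forces $p_1=p_2=0$, which pins down a unique consistent mean field $\bar\mu(u_1,u_2)=(u_1-u_2+1+\mathbb{E}[u_0^c])/3$. Substituting this back, each major player's cost is strictly convex in its own control because the second-order condition $\frac{2(1-p_1-p_2)}{4-(1-p_1-p_2)}+c>0$ holds, so the leaders' best responses are unique and the equilibrium reduces to the two-equation system \eqref{Eq:2EqSystMeth2}. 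That system possesses exactly two solution branches, one strictly positive and one with negative components, and the constraint $u_1,u_2\in\mathbb{R}_+$ eliminates the negative branch. Since every admissible equilibrium must arise in this way, the positive branch --- which coincides with the explicit admissible solution produced in the existence step --- is the unique Multi-Leader-Follower Nash Equilibrium.
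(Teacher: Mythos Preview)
Your proposal is correct and follows essentially the same approach as the paper: both rely on Proposition~\ref{prop:mlfne} to reduce the equilibrium to the explicit system~\eqref{eq:mlfne_system} and then appeal to the fact that this system has a unique admissible solution. The paper's own proof is in fact a one-sentence invocation of that proposition, whereas you spell out the well-definedness of $\Delta$ and recapitulate the uniqueness argument (strict convexity, Lemma~\ref{lemma:inequality_mlf}, and the positive/negative branch dichotomy of~\eqref{Eq:2EqSystMeth2}) more explicitly than the paper does.
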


	\begin{proof}[Proof of Theorem~\ref{the:mlfne_existence}]
		Since the system given in \eqref{eq:mlfne_system} gives the Multi-Leader-Follower Nash Equilibrium solution and since this system has a unique solution, there exists a unique Multi-Leader-Follower Nash Equilibrium in the Mean Field Game with Major Players.    
	\end{proof}
	
\section{Experiment Results}
\label{sec:Numerical_results}
	\subsection{NE: Experiments and Intuitive Remarks} 
	\label{subsec:NE_remarks}

	The solution of the above equation system (\ref{eq:NESolSystem}), $\bar{\mu}$, $u_1^{**}$ and $u_2^{**}$, are given in the plots (Figure \ref{fig:BarMinMeth1}, \ref{fig:BarMaj1Meth1}, and \ref{fig:BarMaj2Meth1}).

	First we interpret the results related to the market shares. (Figure~\ref{fig:BarMinMeth1}) From the results, we can infer that at any level of unit cost of advertisement, $c$, and initial market share (mean of initial control of minor players), $\mathbb{E}[u_0^c]$, market shares of companies become closer. In other words $\bar{\mu}$ is closer to 0.5 than $\mathbb{E}[u_0^c]$. Secondly, we can see that this effect is higher when unit cost of advertisement is lower. This is because at lower levels of $c$, both companies are advertising heavily and they affect the marginalized customers on each end. As unit cost of advertisement increases, this effect is less prevalent and market share of product 1 settles down at $\frac{1+\mathbb{E}[u_0^c]}{3}$. On the other hand, if the cost of advertisement, $c$, goes to 0, market becomes perfectly shared, in other words $\bar{\mu} \rightarrow 0.5$ as $c \searrow 0$. Finally, we infer that if at the beginning, the market starts perfectly shared (ie. $\mathbb{E}[u_0^c]=0.5$) it stays in that way (ie. $\bar{\mu}=0.5$).
	
	Now we can focus on the interpretation of the results related to the equilibrium advertisement efficiencies (Figure \ref{fig:BarMaj1Meth1} and \ref{fig:BarMaj2Meth1}). As expected at any initial market share ($\mathbb{E}[u_{0}^c]$) level, as the unit cost of advertisement increases, major players are advertising less because of the high costs. In other words, when $c \nearrow \infty$, $u_1^{**}$ and $u_2^{**}$ goes to 0. 
	
	Secondly, since the cost functions of major players are the same, symmetric behaviour is observed. In other words, at the same level of unit cost of advertisement, companies are advertising the same amount if they start at the same level of market share. Following this, if the market is shared perfectly initially ($\mathbb{E}[u_{0}^c]=0.5$), both major players have the same advertisement efficiency level; in other words at the equilibrium.
	
	\begin{figure}[H]
    \centering
	\begin{subfigure}{\linewidth}
	\centering
		\includegraphics[width=0.9\linewidth]{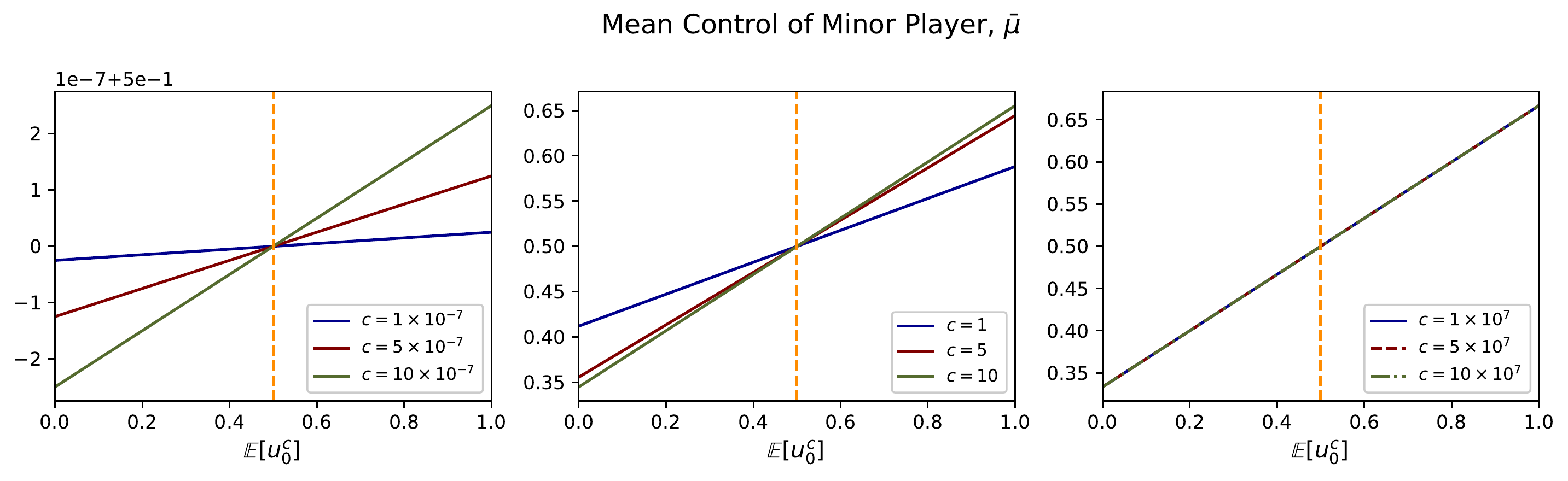}
	\end{subfigure}
		\begin{subfigure}{\linewidth}
		\centering
		\includegraphics[width=0.95\linewidth]{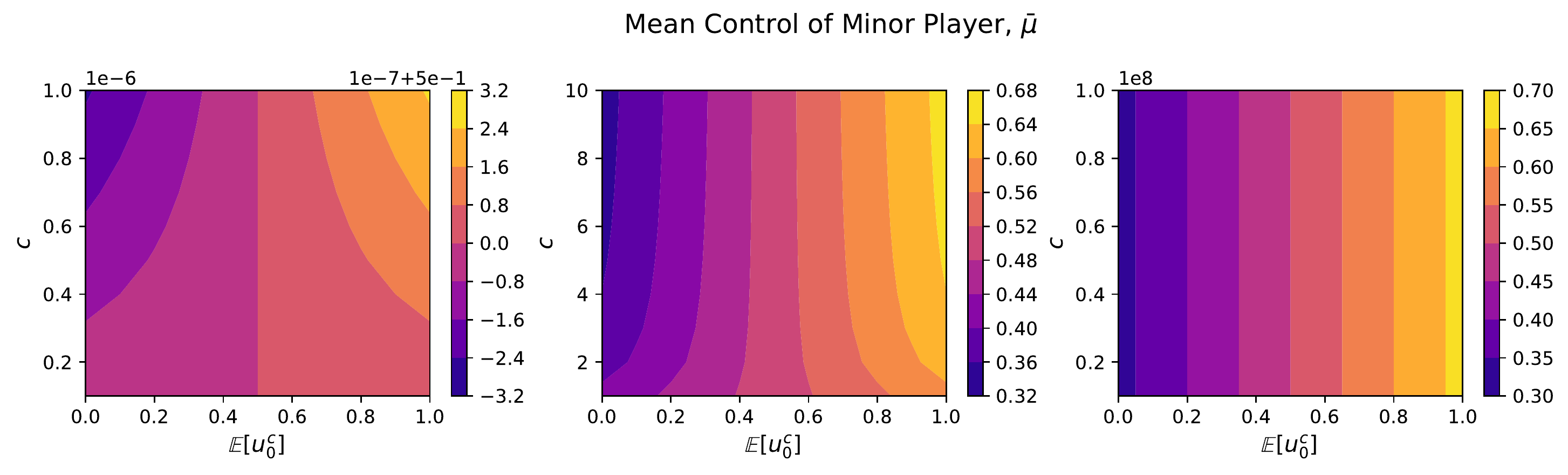}
	\end{subfigure}
	\caption{\textbf{NE:} $\bar{\mu}$, under Different  $\mathbb{E}[u_0^c]$ and $c$ Values}
	\label{fig:BarMinMeth1}
\end{figure}

		\begin{figure}[H]
	\centering
	\begin{subfigure}{\linewidth}
	\centering
		\includegraphics[width=0.9\linewidth]{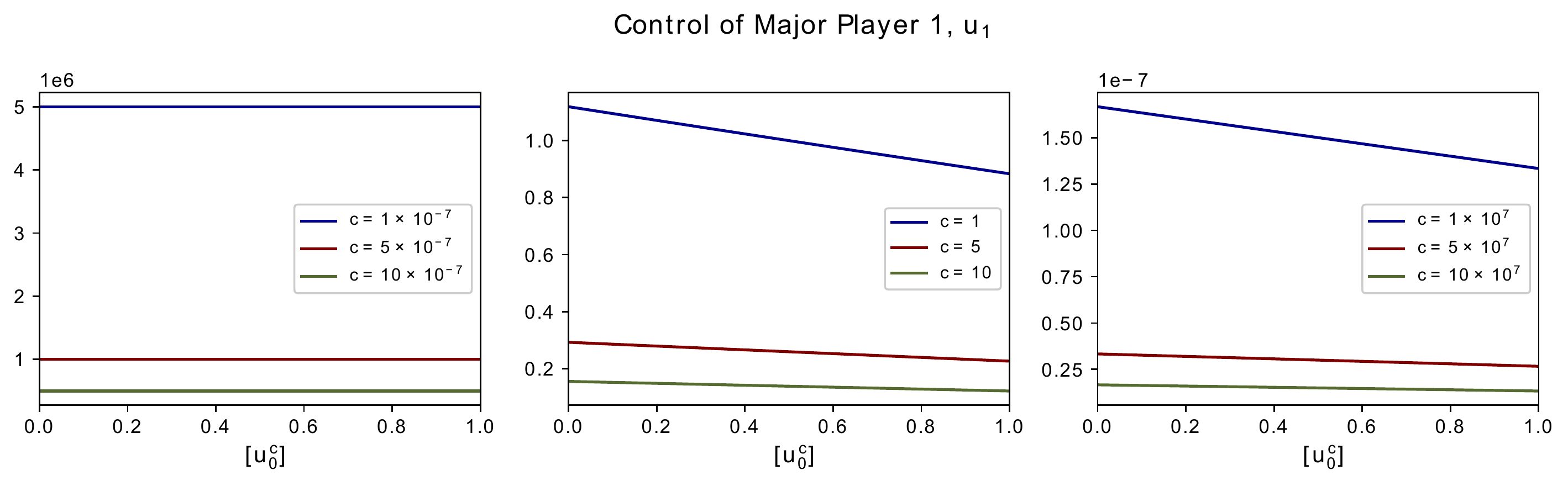}
	\end{subfigure}
		\begin{subfigure}{\linewidth}
		\centering
		\includegraphics[width=0.95\linewidth]{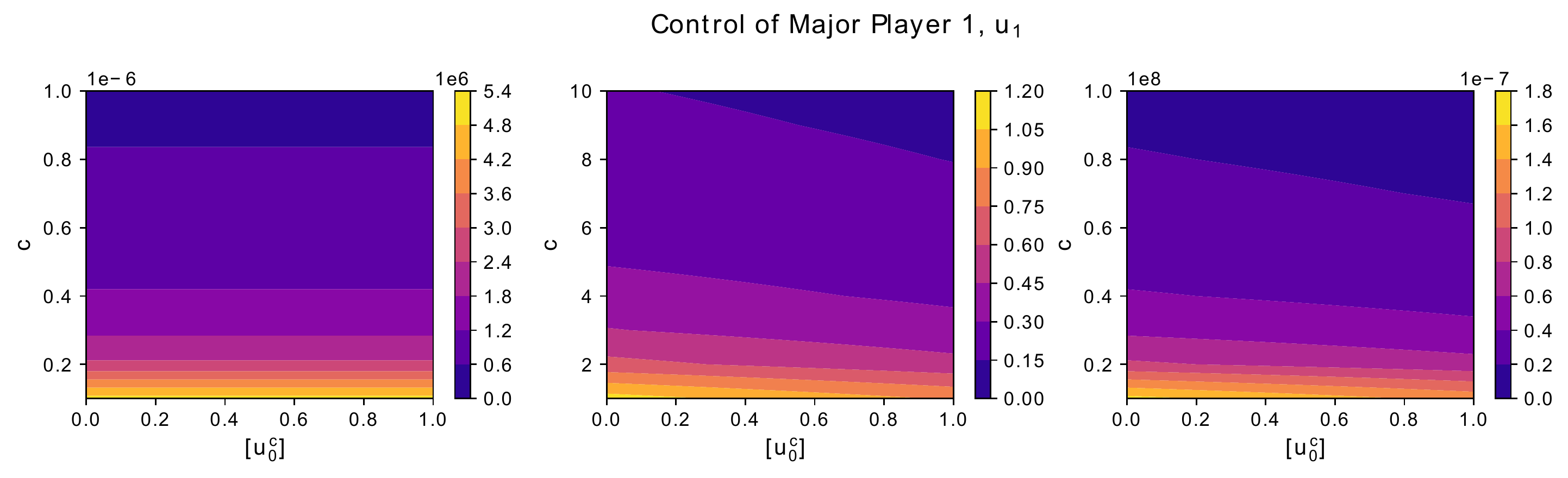}
	\end{subfigure}
	\caption{\textbf{NE:} Control of Major Player 1, $u_1^{**}$, under Different  $\mathbb{E}[u_0^c]$ and $c$ Values}
	\label{fig:BarMaj1Meth1}
\end{figure}

	\begin{figure}[H]
	\centering
	\begin{subfigure}{\linewidth}
	\centering
		\includegraphics[width=0.9\linewidth]{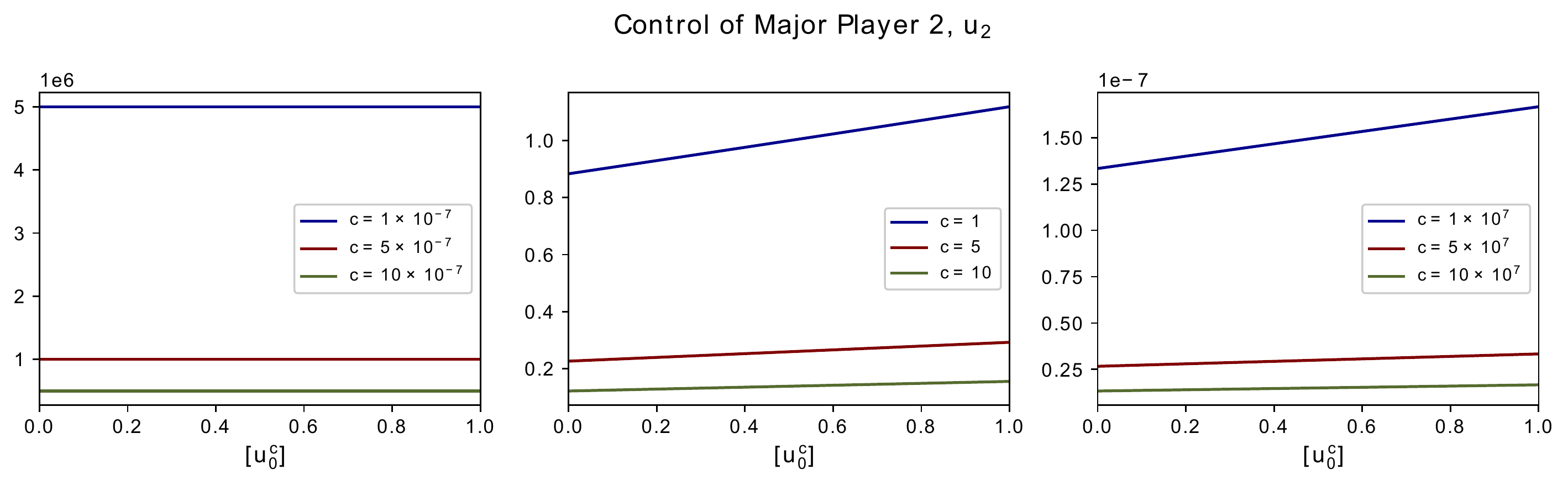}
	\end{subfigure}
		\begin{subfigure}{\linewidth}
		\centering
		\includegraphics[width=0.95\linewidth]{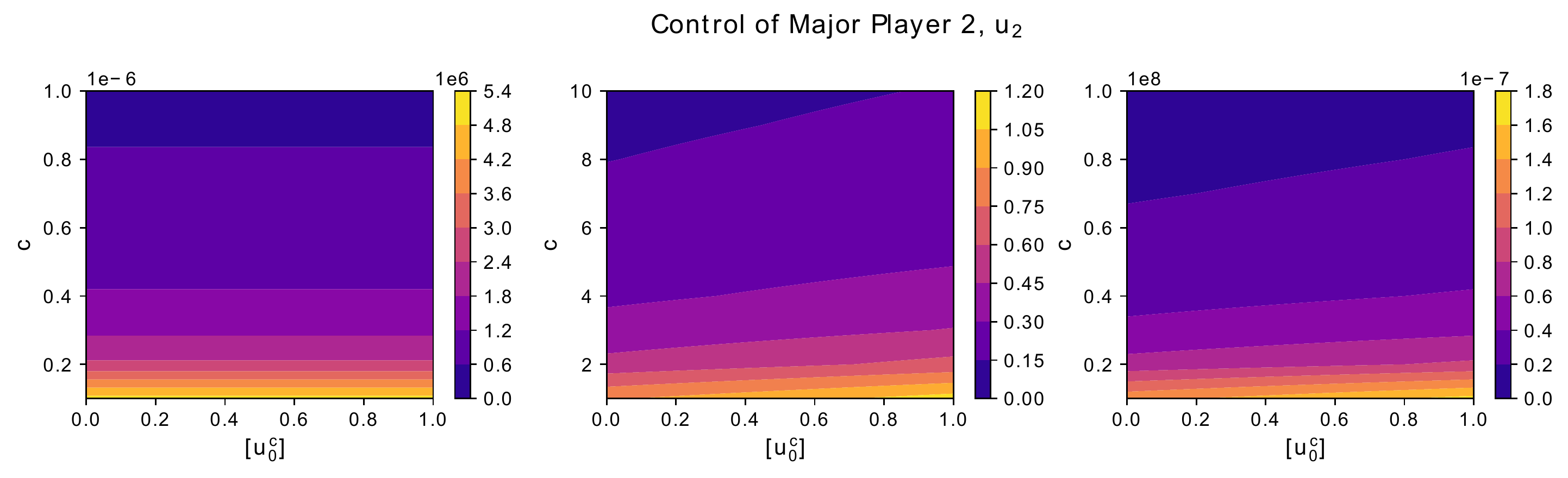}
	\end{subfigure}
	\caption{\textbf{NE:} Control of Major Player 2, $u_2^{**}$, under Different  $\mathbb{E}[u_0^c]$ and $c$ Values}
	\label{fig:BarMaj2Meth1}
\end{figure}
	
	 Furthermore, we realize that at every cost of unit advertisement, as initial market share of product 1 ($\mathbb{E}[u_{0}^c]$) increases, $u_1^{**}$ decreases slightly and $u_2^{**}$ increases slightly. The explanation as follows: Since the market share of Product 1 is increasing with $\mathbb{E}[u_{0}^c]$, Company 1 decides on smaller advertisement efficiency level when its market share is larger; in other words, $u_1^{**}$ gets smaller. This result is in the same direction with the findings in \cite{Doyle1968}. On the other hand, since the initial market share for Product 2 is smaller, Company 2 decides to have larger advertisement efficiency level, in order to be able to persuade customers of the opposing company.

    Finally, we note that there is \textbf{no} Nash Equilibrium, where market share becomes polarized. In other words, for any $u_1$ and $u_2$, having $\bar{\mu}=0$ or $\bar{\mu}=1$ is not a Nash equilibrium. Further, there is \textbf{no} Nash Equilibrium, where companies are not advertising. In other words, for any $\bar{\mu}$ having $u_1=0$ and/or $u_2=0$ is not a Nash Equilibrium.

	\subsection{MLF-NE:Experiments and Intuitive Remarks} 		
	\label{subsec:MLFNE_remarks}

	\begin{figure}[H]
    \centering
		\begin{subfigure}[c]{\linewidth}
		\centering
		\includegraphics[width=0.95\linewidth]{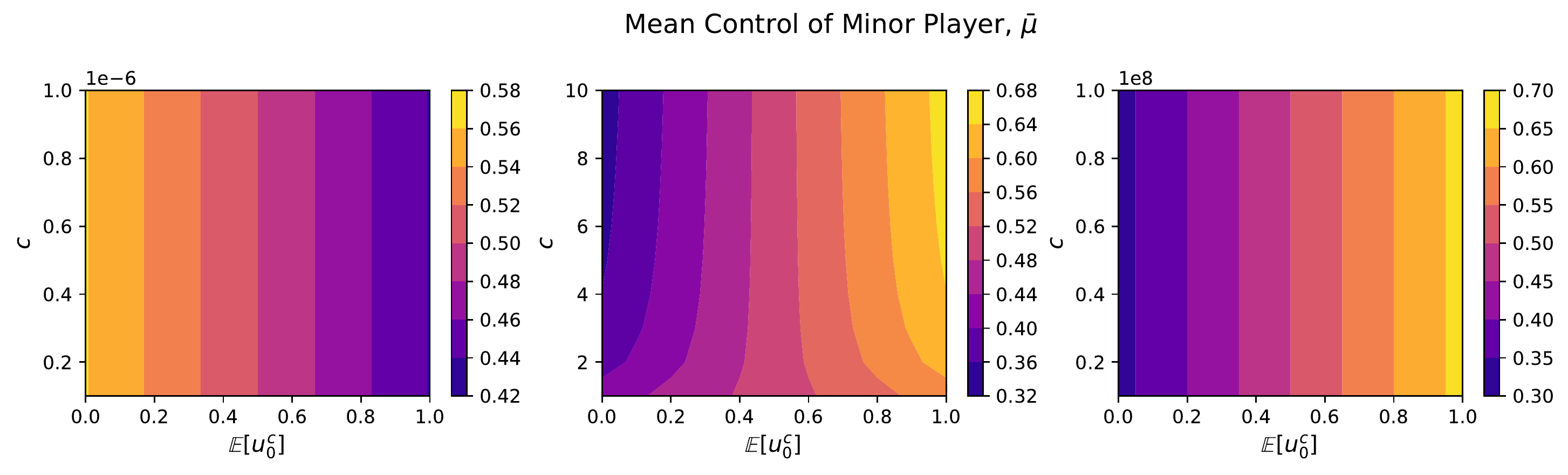}
	\end{subfigure}
	\caption{\textbf{MLF-NE:} $\bar{\mu}$, under Different  $\mathbb{E}[u_0^c]$ and $c$ Values}
	\label{fig:ContMinMeth2}
\end{figure}

\begin{figure}[H]
	\centering
		\begin{subfigure}[b]{\linewidth}
		\centering
		\includegraphics[width=0.95\linewidth]{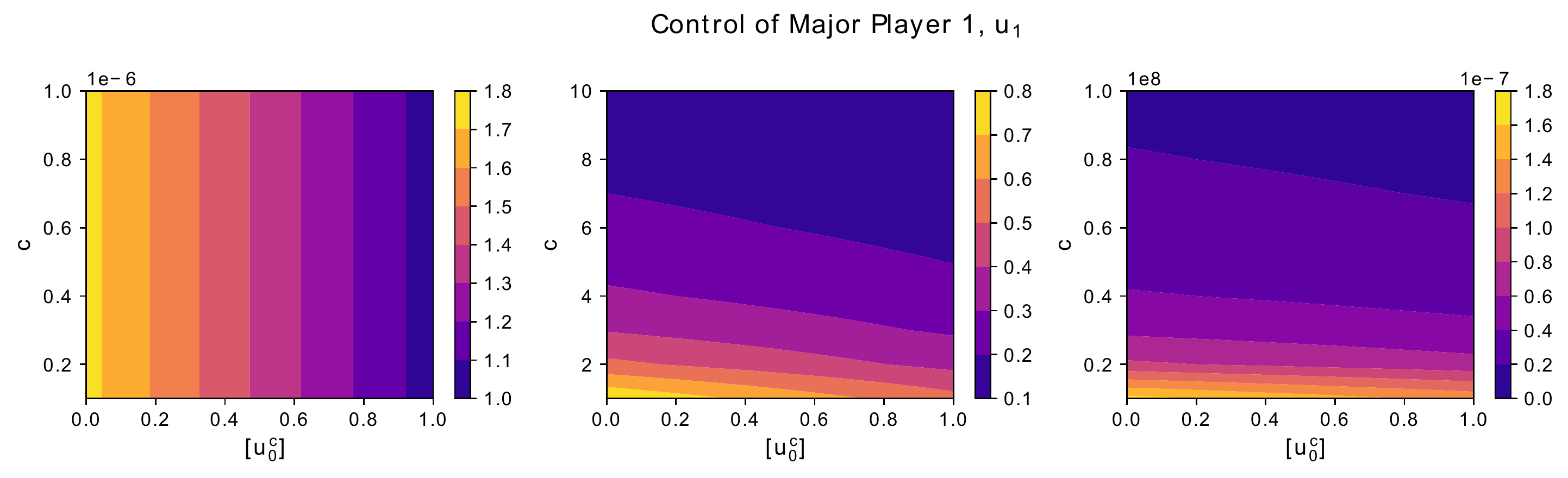}
	\end{subfigure}
	\caption{\textbf{MLF-NE:} Control of Major Player 1, $u_1^{**}$, under Different  $\mathbb{E}[u_0^c]$ and $c$ Values}
	\label{fig:ContMajMeth2u1}
\end{figure}

	In this section, we interpret the results of the Multi-Leader-Follower Nash Equilibrium. (Figure~\ref{fig:ContMinMeth2}, \ref{fig:ContMajMeth2u1} and \ref{fig:ContMajMeth2u2}) First, we note that the same intuitive remarks given in Subsection~\ref{subsec:NE_remarks} hold for the results. We can summarize them briefly here: 1) The market becomes more homogenized at the equilibrium and this homogenization effect is more prevalent when the cost of advertisement is lower. 2) The behavior of companies are symmetric. If the initial market shares are the same, they advertise the same amounts. 3) At every cost of unit advertisement, as the initial market share of a company increases, its advertising level decreases. 4) There is \textbf{no} Multi-Leader-Follower Nash Equilibrium, where market shares become polarized or where companies are not advertising.

\begin{figure}[H]
	\centering
		\begin{subfigure}[b]{\linewidth}
		\centering
		\includegraphics[width=0.95\linewidth]{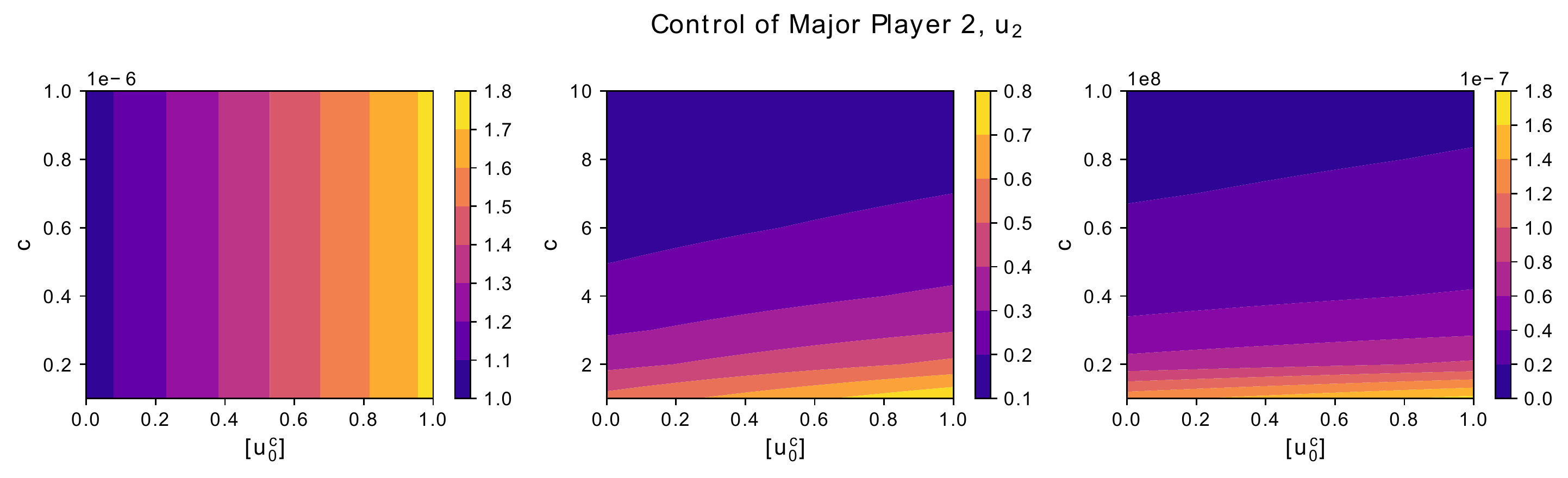}
	\end{subfigure}
	\caption{\textbf{MLF-NE:} Control of Major Player 2, $u_2^{**}$, under Different  $\mathbb{E}[u_0^c]$ and $c$ Values}
	\label{fig:ContMajMeth2u2}
\end{figure}

\section{Comparison of Nash Equilibrium and Multi-Leader-Follower Nash Equilibrium}
We analyzed the different equilibrium behavior (NE \& MLF-NE) under different initial market share and cost of advertisement choices in Subsections~\ref{subsec:NE_remarks} and \ref{subsec:MLFNE_remarks}. In this part of the report, differences and similarities between standard Nash Equilibrium \textbf{(NE)} and the Multi-Leader-Follower Nash Equilibrium \textbf{(MLF-NE)} are going to be stated. Firstly, we need to emphasize again that the Multi-Leader-Follower Nash equilibrium does not give the Nash Equilibrium among all the players in the classical sense. It gives an Stackelbergian-like equilibrium; therefore, we end up with different results.

 As stated above in Subsections~\ref{subsec:NE_remarks} and \ref{subsec:NE_remarks}, both equilibrium points have some similarities. For example, major players' advertisement efficiency decisions are moving in the same direction in both of the equilibrium notions under the changes in unit cost of advertisement, $c$, or initial market share, $\mathbb{E}[u_0^c]$. For example, in both cases, major players are advertising less when $c$ is higher. Moreover, in both of the equilibrium notions, market shares become closer and if the unit cost of advertisement goes to infinity, $\bar{\mu}$ goes to $\frac{1+\mathbb{E}[u_0^c]}{3}$. 

\begin{figure}[H]
    \centering
		\includegraphics[width=1\linewidth]
			{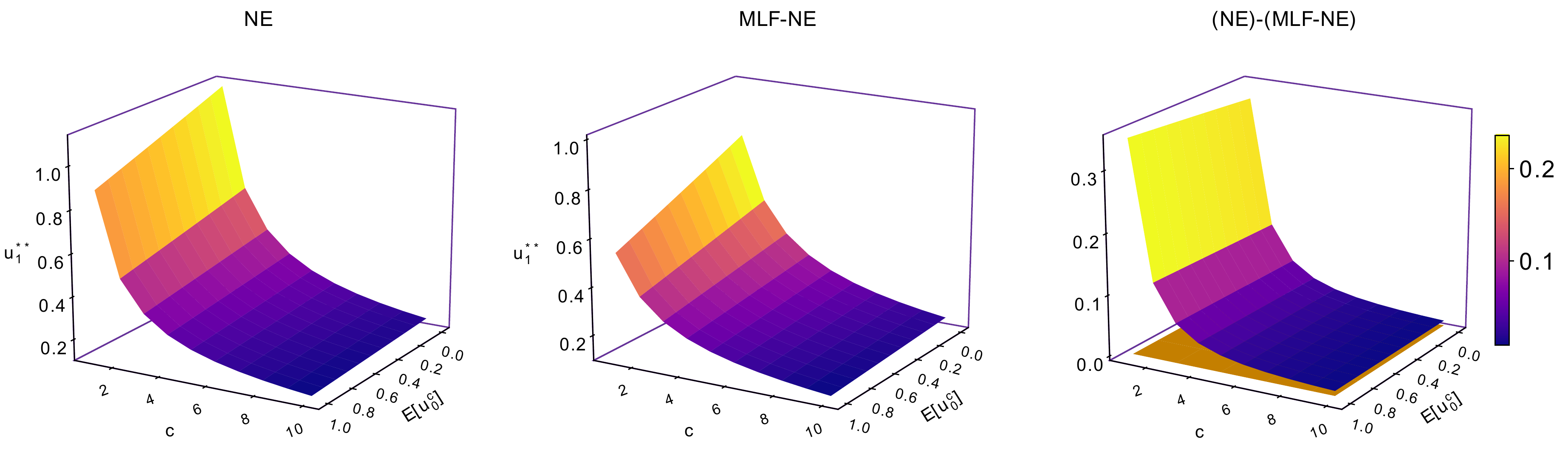}
			\caption{{\textbf{NE vs. MLF-NE:} Control of Major Player 1, $u_1^{**}$, under Different $\mathbb{E}[u_0^c]$ and $c$ Values in NE (left) and in MLF-NE (middle); The difference of the control of the Major Player 1, $u_1^{**}$, in NE vs MLF-NE (right).}}
			\label{fig:Major1_comp_3D}
	\end{figure}

    \begin{figure}[H]
    \centering
			\includegraphics[width=0.65\linewidth, height=0.45\linewidth]
			{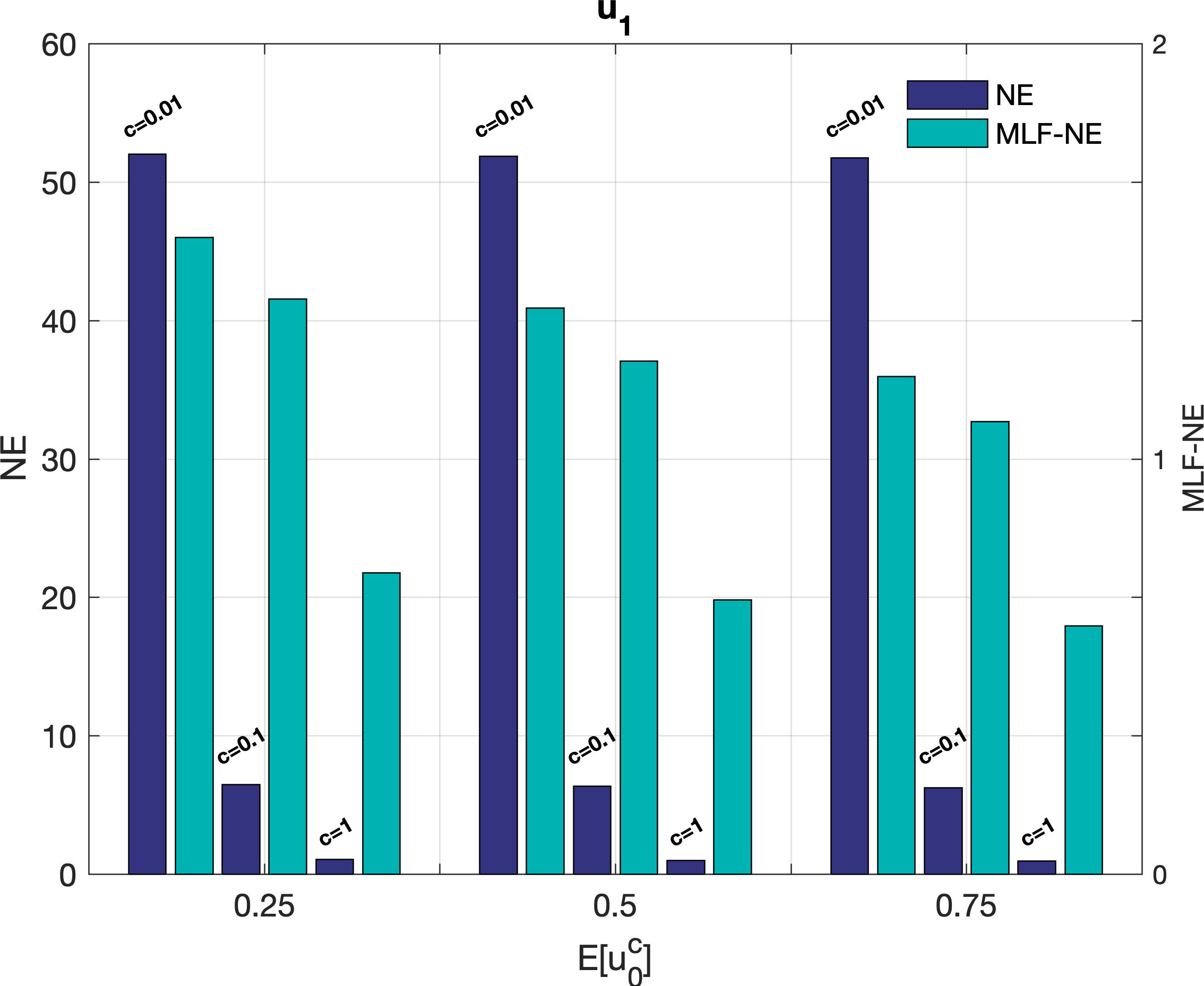}
			\caption{\textbf{NE vs. MLF-NE:} Control of Major Player 1, $u_1^{**}$, under Different $\mathbb{E}[u_0^c]$ and $c$ Values}
			\label{fig:Major1_comp}
	\end{figure}

	\begin{figure}[H]
	\centering
			\includegraphics[width=0.65\linewidth, height=0.45\linewidth]{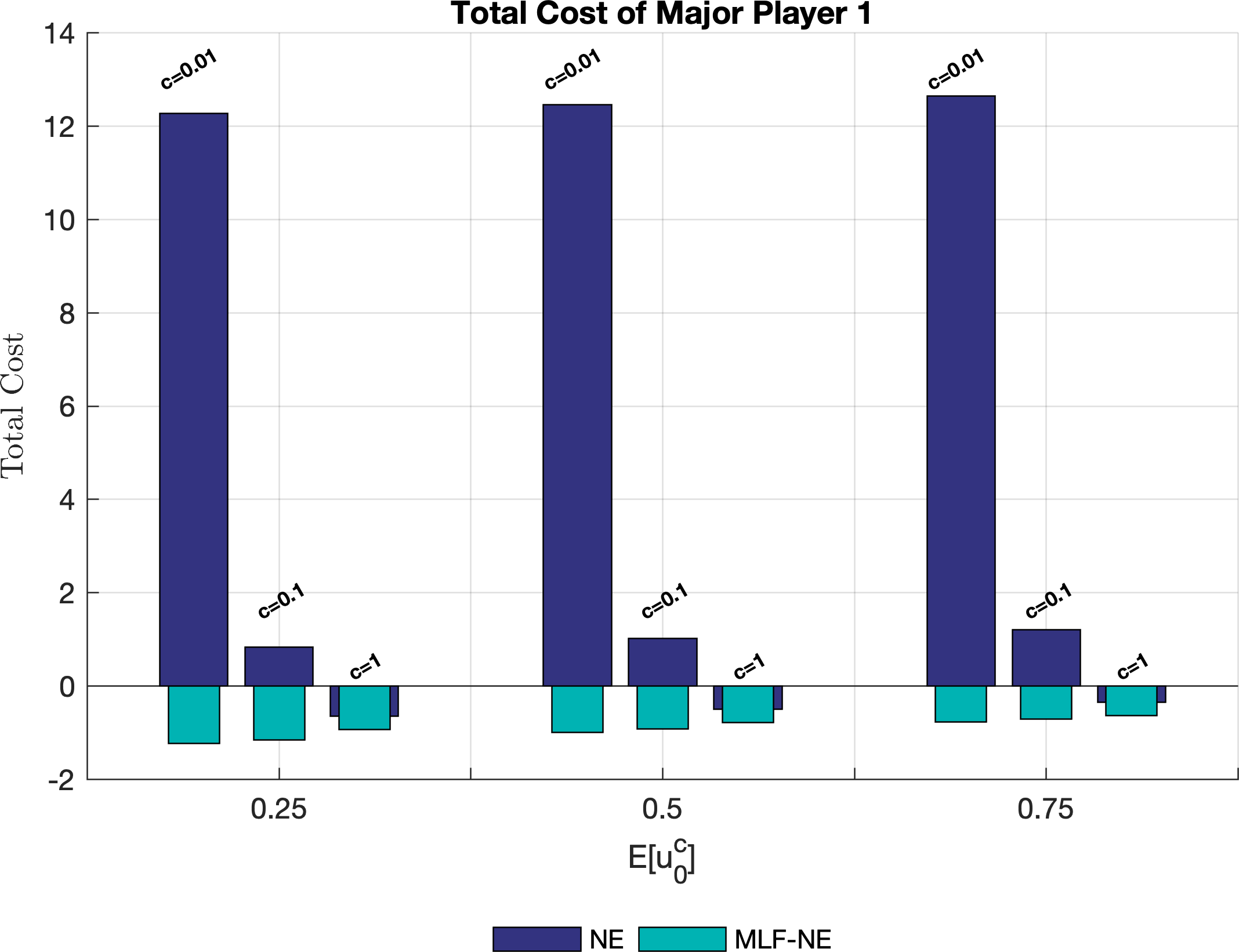}
			\caption{\textbf{NE vs. MLF-NE:} Total Cost of Major Player 1 under Different $\mathbb{E}[u_0^c]$ and $c$ Values}\label{fig:majorcost_comp}
	\end{figure}

 As there are some similarities in the equilibrium results, there are also differences. {In Figure \ref{fig:Major1_comp_3D}, we can see the control chosen by the Major Player 1, $u_1^{**}$ in the Nash Equilibrium (left) and in the Multi-Leader-Follower Nash Equilibrium (middle) given different unit cost of advertisement, $c$ and initial market share, $\mathbb{E}[u_0^c]$ values. In order to compare them, we also plot the difference of the controls chosen by the Major Player 1 in two different equilibria (right). In this 3D plot, we can see that this difference is positive.} Therefore, we note that companies are advertising less in the Multi-Leader-Follower Nash Equilibrium at every level of unit cost of advertisement, $c$ {and initial market share, $\mathbb{E}[u_0^c]$}. However, this difference is more significant at the lower levels of $c$. In other words, in Nash Equilibrium solution there is excessive advertisement (Figure {\ref{fig:Major1_comp_3D}} and \ref{fig:Major1_comp}). Consequently, when the total cost is analysed, it is seen that companies are having a higher cost in the Nash Equilibrium. However this difference is getting  insignificant as unit cost of advertisement increases (Figure \ref{fig:majorcost_comp}). Finally, even if in both cases market shares become closer ($\bar{\mu}$ is closer to 0.5 than $\mathbb{E}[u_0^c]$), it is seen that in Nash Equilibrium solution if $\mathbb{E}[u_0^c]<0.5 (>0.5)$ then also $\bar{\mu}<0.5 (>0.5)$. However, in the Multi-Leader-Follower Nash Equilibrium solution, it can be seen that at the lower levels of the unit cost of advertisement, we end up with $\bar{\mu}>0.5 (<0.5)$ when $\mathbb{E}[u_0^c]<0.5 (>0.5)$. This means that if the cost of advertisement is small enough, in MLF-NE, companies in an adverse position initially have a chance to be the leader of the market with advertising. (Figure \ref{fig:MinControl_comp})

    \begin{figure}[H]
    \centering
			\includegraphics[width=0.65\linewidth, height=0.45\linewidth]{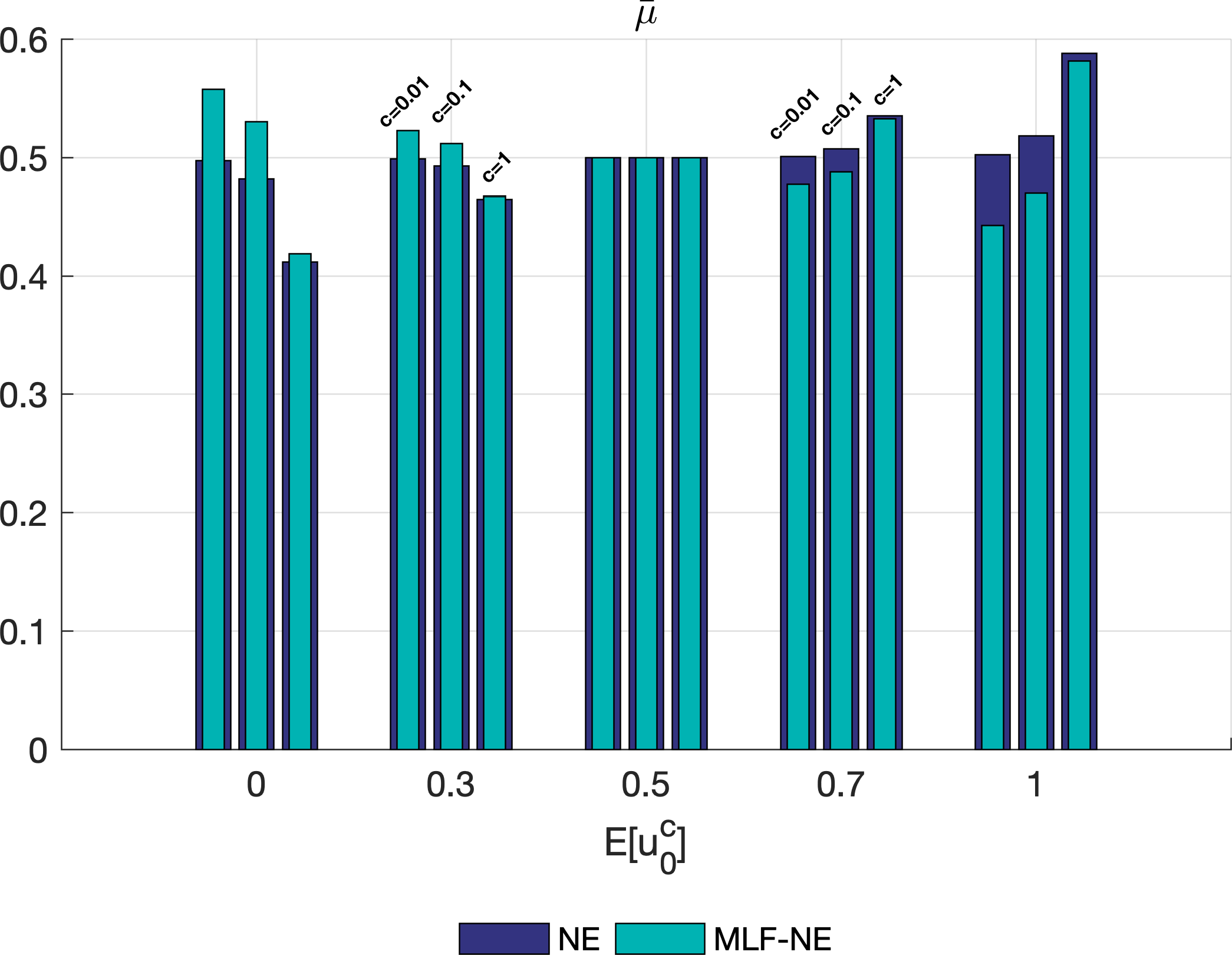}
			\caption{\centering \textbf{NE vs. MLF-NE:} $\bar{\mu}$ under Different $\mathbb{E}[u_0^c]$ and $c=(0.01, 0.1, 1)$ Values}
			\label{fig:MinControl_comp}
	\end{figure}

In conclusion, since companies are advertising significantly higher in Nash Equilibrium at the lower levels of $c$, they are ending up with a significantly higher total cost. Therefore, it can be concluded that it would be the best for companies if they can understand the consumers' behavior and play a 2-player game with the opponent company.

\section{Conclusion}

 In this paper, we analysed two different equilibrium notions for the advertising competition game in a duopoly market when consumers are involved. In the first equilibrium notion (NE), Nash equilibrium is found among all players including major and minor players; on the other hand, in the second equilibrium notion (MLF-NE), major player is assuming minor players are rational and constructing a Nash equilibrium among themselves and they are playing a 2-player game with the opposing company. In this paper, since we have a large number of consumers, mean field game approximation is used in order to find the Nash equilibrium of the minor players. After models are constructed, solution methodologies are given and it is concluded that there exists a unique solution in both cases. Finally, the solutions are compared and it is recommended to the companies in the duopoly to understand the behavior of consumers and use the MLF-NE solution. In this way, they have smaller total costs in the equilibrium by evading from unnecessary levels of advertisement. 

 Main contribution of this report is using the multiple major players and minor players mean field games methodology on the advertisement competition of a duopoly. Even if the model is designed as a static (one-shot) game, the extension to the dynamic version is planned as a future work. \newline

\textit{\textbf{Acknowledgments.}} We would like to thank Dr. Mathieu Lauriere for his valuable comments on the manuscript. Furthermore, we thank the anonymous referee for their valuable feedback that helped us to clarify and improve this manuscript.

\bibliographystyle{siam}

\end{document}